\documentclass[11pt]{article}
\usepackage{latexsym,amsmath, mathrsfs,amssymb,tabu,
	CJKutf8,graphicx,bm,blkarray,delarray,float,color,soul,extarrows, ulem, framed}
	\usepackage[thicklines]{cancel}
\usepackage{hyperref}
\hypersetup{hypertex=true, 
				colorlinks=true, 
					linkcolor=blue,
					anchorcolor=blue,
					citecolor=blue}
\textheight 9in \textwidth 6.5in \oddsidemargin 0pt \topmargin -8pt
\pagestyle{plain}
\newtheorem{lemma}{\it Lemma}
\newtheorem{theorem}{\it Theorem}
\newtheorem{proof}{\it Proof}



\begin{document}
\setulcolor{red} 
\setstcolor{red} 
\sethlcolor{yellow} 

\title{\bf Error estimate for the first order 
energy stable \\ scheme of ${\bf Q}$-tensor nematic model}
\author{Jin Huang, Xiao Li and Guanghua Ji\footnote{Corresponding author. \\Emails: 202331130037@mail.bnu.edu.cn(J. Huang), lixiao@bnu.edu.cn (X. Li), ghji@bnu.edu.cn (G. Ji).} \\Laboratory of Mathematics and Complex Systems (Ministry of Education), \\School of Mathematical Sciences, Beijing Normal University, Beijing 100875, P.R. China.} 
     \date{}
 \maketitle

\begin{abstract}
We present rigorous error estimates towards a first-order unconditionally energy stable scheme 
	designed for 3D hydrodynamic ${\bf Q}$-tensor model of nematic liquid crystals.
This scheme combines the scalar auxiliary variable (SAV), stabilization and projection method together.
The unique solvability and energy dissipation of the scheme are proved.
We further derive the boundness of numerical solution in $L^{\infty}$ norm 
with mathematical deduction. 
Then, 
we can give the rigorous error estimate of order $O(\delta t)$ in the sense of $L^2$ norm, 
where $\delta t$ is the time step. 
Finally, we give some numerical simulations to demonstrate the theoretical analysis.
\end{abstract}

\section{Introduction}
Liquid crystals present a state of matter with properties intermediate between those of liquids and crystalline solids. 
Nematic liquid crystals typically exhibit molecular orientational order but lack positional order. 
Several models of nematic liquid crystals are described using velocity and pressure $({\bf u},p)$ and a director vector ${\bf d}$ \cite{de_Gennes_1995_The, Leslie_FM_1968_Some}. 
A key limitation of the unit vector model is
	its singularities at point defects, 
	as well as its failure to capture line defects 
	and biaxiality in the original Ericksen-Leslie theory.
An alternative approach is use
 	a symmetric and traceless matrix ${\bf Q}\in R^{3\times 3}$ 
 	to describe the orientation of liquid crystals.
In fact,
 	${\bf Q}$ measures the deviation of the second order moment tensor from its isotropic value.
We consider the general Landau-De Gennes free energy functional
\begin{equation}\label{eq:model_free_energy}
E({\bf Q}) = \int_{\Omega} \left( 
		\frac{K}{2} |\nabla {\bf Q}|^2 + F_B({\bf Q}) 
		\right) d{\bf x},
\end{equation}
where the first two terms in the integral represent the kinetic energy and elastic energy, respectively. 
	$K$ is a material-dependent elastic constant, 
	$F_B$ is the bulk free energy density,
\begin{align*}
F_B({\bf Q}) = \frac{\alpha}{2} tr({\bf Q}^2) + \frac{\beta}{3}tr({\bf Q}^3) + \frac{\gamma}{4} tr^2({\bf Q}^2).
\end{align*}
and $\alpha$, $\beta$ and $\gamma > 0$ are material-dependent and temperature-dependent constants.

According to \cite{Beris_AN_1994_Thermodynamics, Wang_Q_2002_A, Zhao_J_2016_Semi}, 
	the non-dimensional governing equations of nematic liquid
	crystal flows with hydrodynamics have the following form
\begin{equation}\label{eq:model_equations}
\begin{aligned}
&{\bf Q}_t + {\bf u}\cdot \nabla {\bf Q} - S(\nabla {\bf u}, {\bf Q}) = M {\bf G}, 
\\
&
{\bf G} = -\frac{\delta E({\bf Q})}{\delta {\bf Q}} = 
	K \Delta {\bf Q} - f_B({\bf Q}), 
\\
&{\bf u}_t + {\bf u}\cdot \nabla {\bf u} =  \eta \Delta {\bf u} - \nabla p 
	+ \nabla \cdot \sigma({\bf Q}, {\bf G}) - \nabla {\bf Q}:{\bf G},
\\
& \nabla \cdot {\bf u} = 0.
\end{aligned}
\end{equation}
where
\begin{equation}
\begin{aligned}
& 
S(\nabla {\bf u}, {\bf Q}) = {\bf W}\cdot {\bf Q} - {\bf Q} \cdot {\bf W}
		+ a({\bf Q}\cdot {\bf D} + {\bf D} \cdot {\bf Q}) + \frac{2a}{3}({\bf D} - \frac{\nabla \cdot {\bf u}}{3}{\bf I})
		- 2a({\bf D} : {\bf Q})\left( {\bf Q} + \frac{\bf I}{3}\right),
\\ &
\sigma({\bf Q}, {\bf G}) = {\bf Q}\cdot {\bf G} - {\bf G} \cdot {\bf Q}
		- a({\bf G}\cdot {\bf Q} + {\bf Q} \cdot {\bf G}) - \frac{2a}{3}{\bf G} 
		+ 2a({\bf Q} : {\bf G})\left( {\bf Q} + \frac{\bf I}{3}\right),
\\ &
f_B({\bf Q}) = \alpha {\bf Q} + \beta \left( {\bf Q}^2 - \frac{tr({\bf Q}^2)}{3} {\bf I}\right)
	+ \gamma tr({\bf Q}^2){\bf Q},
\end{aligned}
\end{equation} 
${\bf D} = \frac{\nabla {\bf u} + \nabla {\bf u}^T}{2}$
	and 
	${\bf W} = \frac{\nabla {\bf u} - \nabla {\bf u}^T}{2}$
	are respectively, the rate of strain and vorticity tensors.
Moreover, 
	the first two terms in $S(\nabla {\bf u}, {\bf Q})$ together with the material derivative of
${\bf Q}$ define the Gordon-Schowalter derivative, 
${\bf G}$ is the molecular field, 
${\sigma}({\bf Q}, {\bf G})$ the elastic stress tensor, 
and $a \in [-1,1]$ a geometric parameter of the nematic liquid crystal molecule
\cite{Wang_Q_2002_A}.
The initial conditions of the system are
\begin{align*}
{\bf u}({\bf x},0) = {\bf u}_0({\bf x}), \qquad 
	 {\bf Q}({\bf x},0) = {\bf Q}_0({\bf x}).
\end{align*}
The following boundary conditions can be used:
\begin{itemize}
\item[1.] ${\bf u}$ and ${\bf Q}$ are periodic on $\partial \Omega$.

\item[2.] ${\bf u}|_{\partial \Omega} = 0$, 
			${\bf Q}|_{\partial \Omega} = {\bf Q}^0$ 
			or 
			$\partial_n {\bf Q}|_{\partial \Omega} = 0$. 
\end{itemize}
where ${\bf Q}_0$ and ${\bf Q}^0$ present the initial and boundary condition of the problem,
respectively.

There are some mathematical studies
\cite{Cavaterra_C_2016_Global, Paicu_M_2012_Energy, Fan_J_2013_Regularity, 
		Guillén-González_F_2014_Weak, Guillén-González_F_2015_Weak} 
		and numerical schemes \cite{MacDonald_CS_2015_Efficient, Ramage_A_2016_Computational} about the model.
Despite the aforementioned numerical works are developed to simulate flow behavior of nematic liquid crystals with the ${\bf Q}$-tensor model, 
there has been less systematic effort to analyze the numerical scheme 
	employed to solve the model equations, 
	especially in exploring the variational and dissipative property of the model.
Cai, Shen and Xu \cite{Cai_Y_2017_Stable} proposed a first-order nonlinear scheme 
    for a 2D dynamic ${\bf Q}$-tensor model of nematic liquid crystals 
	using a stabilizing technique\cite{J_Shen_2010_discrete, J_Shen_2015_decoupled},
	and established the maximum principle and convergence analysis. 
However, the scheme they constructed cannot guarantee unconditional energy stability. 
For the hydrodynamic {\bf Q}-tensor model,
Zhao et al.\cite{Zhao_J_2016_Semi} developed a second-order, coupled, nonlinear scheme 
	using the convex splitting strategy\cite{DJ_Eyre_1998_unconditionally}.
Later, 
they\cite{Zhao_J_2017_novel} developed a linear, second-order, 
	and unconditionally energy stable numerical scheme
	using an "IEQ" strategy\cite{X_Yang_2016_linear, X_Yang_2017_numerical}.
Neglecting the influence of flow fields,
Yue et al.\cite{Gudibanda_VM_2022_Convergence, Yue_Y_2022_On} 
	presented a fully discrete convergent finite difference scheme 
	for the 3D $Q$-tensor model of liquid crystals based on the IEQ method. 
They proved the stability of the scheme 
    and further obtained a uniform $H^2$ estimate for the numerical solutions.
Ji \cite{Ji_G_2020_BDF2} constructed a BDF2 linear energy-stable numerical scheme
	for the hydrodynamic $Q$-tensor model
  	by following the "SAV" startegy\cite{J_Shen_2018_scalar, J_Shen_2019_new, C_Chen_2019_fast}.


To the best of our knowledge, 
error estimates for the hydrodynamic $Q$-tensor model are still unavailable.
Therefore, we aim to give detailed mathematical analysis 
	for a first order unconditionally energy stable scheme. 
The rest of this paper is organised as follows. 
In Section \ref{sec:Numerical_schemes}, 
we use the SAV method combind with stabilization terms 
	to construct a semi-discrete unconditionally energy-stable scheme 
	\eqref{eq:first_order_scheme_Q}-\eqref{eq:first_order_scheme_nabla_u}
	for the coupled CH-NS system.
We also prove that the solution of the corresponding system is unique. 
In Section \ref{sec:Error_estimate}, 
	we derive the error estimates for phase function, velocity field.
Finally, some numerical examples are presented to verify analytical results 
	in Section \ref{Sec:Numerical_examples}.


\section{Numerical schemes}
\label{sec:Numerical_schemes}
In this section, 
	we construct a linear and unconditionally energy stable scheme
to solve the system \ref{eq:model_equations}. 
We use the SAV approach combined with stabilization terms 
based on the backward differentiation formula for temporal discretization. 
Before starting the following analysis, we will introduce some important notations and definitions that will be used throughout the paper.

For any tensor ${\bf M}_1, {\bf M}_2 \in \mathcal{R}^{3 \times 3}$, 
	vector functions ${\bf u}, {\bf v} \in \mathcal{R}^{3}$ 
	and scalar functions $f, g \in \mathcal{R}$ in $L^2 (\Omega)$, 
we define ${\bf M}_1 : {\bf M}_2 = tr({\bf M}_1 {\bf M}_2)$
and the following $L^2$ inner products:
\begin{align*}
\left( {\bf M}_1, {\bf M}_2 \right) 
	= \int_{\Omega} \sum_{i,j=1}^3 
		\left( {\bf M}_1 \right)_{ij} \left( {\bf M}_2 \right)_{ij}
	  d{\bf x}, 
\quad
\left( {\bf u}, {\bf v} \right) 
	= \int_{\Omega} \sum_{i=1}^3 
			{\bf u}_i {\bf v}_i 
	  d{\bf x}, 
\quad
\left( f, g \right) 
	= \int_{\Omega} fg d{\bf x}. 
\end{align*}
The $L^2$ norms are given by
\begin{align*}
\left\| {\bf M} \right\|^2 = \left( {\bf M}, {\bf M} \right), 
\quad
\left\| {\bf u} \right\|^2 = \left( {\bf u}, {\bf u} \right),
\quad
\left\| f \right\|^2 = \left( f, f \right).
\end{align*}
Moreover, we define the space $\mathcal{H}^s = \{ {\bf u} \in H^s, \nabla \cdot {\bf u} = 0\}$.

\subsection{Equivalent System}
We first reformulate model \eqref{eq:model_equations} into an equivalent form 
	using the SAV method.	
	
Let $\mathcal{E}_1 = \int_{\Omega} (F_B -\frac{S_Q}{2}tr({\bf Q}^2)) dx + C_0$, 
we introduce the scalar auxiliary variable $r = \sqrt{\mathcal{E}_1}$, 
	where $C_0$ is a constant which ensures $\mathcal{E}_1 \geq 0$
		and
		$\frac{S_Q}{2}tr({\bf Q}^2)$ is a stabilization term.   
Then, the free energy \eqref{eq:model_free_energy} can be rewritten as
\begin{equation}\label{eq:modified_energy}
E = \int_{\Omega} \left( \frac{S_Q}{2}tr({\bf Q}^2) + \frac{K}{2}|\nabla {\bf Q}|^2 \right) dx + r^2 - C_0.
\end{equation}
Moreover, we can write the Eq.\eqref{eq:model_equations} as
\begin{equation}\label{eq:equal_model}
\begin{aligned}
&{\bf Q}_t + {\bf u}\cdot \nabla {\bf Q} - S(\nabla {\bf u}, {\bf Q}) = M {\bf G}, 
\\
&
{\bf G} = K\Delta {\bf Q} -S_Q {\bf Q} - r{\bf V},
\\ 
&
r_t = \frac{1}{2}({\bf V}, {\bf Q}_t),
\\
&{\bf u}_t + {\bf u}\cdot \nabla {\bf u} =  \eta \Delta {\bf u} - \nabla p 
	+ \nabla \cdot \sigma({\bf Q}, {\bf G}) - \nabla {\bf Q}:{\bf G},
\\
& \nabla \cdot {\bf u} = 0
\end{aligned}
\end{equation}
where ${\bf V} = \frac{f_B({\bf Q}) - S_Q {\bf Q}}{\sqrt{\mathcal{E}_1({\bf Q})}}
:= \frac{g({\bf Q})}{\sqrt{\mathcal{E}_1({\bf Q})}}$.

\subsection{Time discretization, first-order SAV scheme}
In this subsection, 
we now develop a first order semi-discrete scheme
	using the SAV approach for the newly transformed system\eqref{eq:equal_model}. 
Let $\delta t > 0$ be the time step size 
	and $t^n = n\delta t$ for $n \geq 0$.
We denote the approximation of the variable $f({\bf x}, t)$
	at $t^n$ as $f^n({\bf x})$.

The first-order SAV scheme 
	for the hydrodynamic coupled model \eqref{eq:equal_model} 
	is given as follows.
	
{\bf Step 1.} Determine $({\bf Q}^{n+1}, r^{n+1}, \tilde{\bf u}^{n+1})$ 
from the equations
\begin{align}
&\frac{{\bf Q}^{n+1} - {\bf Q}^{n}}{\delta t} + \tilde{{\bf u}}^{n+1} \cdot \nabla {\bf Q}^{n}
	- S(\nabla \tilde{{\bf u}}^{n+1}, {\bf Q}^{n}) = M {\bf G}^{n+1},
\label{eq:first_order_scheme_Q}
\\ &
{\bf G}^{n+1} = K \Delta {\bf Q}^{n+1} - S_Q {\bf Q}^{n+1} -  r^{n+1}{\bf V}^n,
\label{eq:first_order_scheme_G}
\\ &
\frac{r^{n+1} - r^n}{\delta t} = \frac{1}{2}\int_{\Omega} {\bf V}^n :\frac{{\bf Q}^{n+1} - {\bf Q}^{n}}{\delta t} dx,
\label{eq:first_order_scheme_r}
\\ &
\frac{\tilde{\bf u}^{n+1} - {\bf u}^{n}}{\delta t} + ({\bf u}^n \cdot \nabla )\tilde{\bf u}^{n+1}
	= \eta \Delta \tilde{\bf u}^{n+1} - \nabla p^n + \nabla \cdot \sigma({\bf Q}^n, {\bf G}^{n+1})
	- \nabla {\bf Q}^n : {\bf G}^{n+1},
\label{eq:first_order_scheme_uhat}
\end{align}
where $S^n = S(\nabla {\bf u}^n, {\bf Q}^n)$, 
		$\sigma^n = \sigma({\bf Q}^n, {\bf H}^n)$, 
		${\bf V}^n = {\bf V}({\bf Q}^n)$.
The boundary conditions are either
\begin{align*}
(i) \ \text{periodic;}
\
(ii) \ \tilde{\bf u}^{n+1} |_{\partial \Omega} = 0, \
		{\bf Q}^{n+1} |_{\partial \Omega} = {\bf Q}^0 \
		or \
		\partial_{\bf n}{\bf Q}^{n+1} |_{\partial \Omega} = 0.
\end{align*}
		
{\bf Step 2.} Determine $({\bf u}^{n+1}, p^{n+1})$ 
from the equations
\begin{align}
&
\frac{{\bf u}^{n+1} - \tilde{\bf u}^{n+1} }{\delta t} + \nabla (p^{n+1} - p^n) = 0, 
\label{eq:first_order_scheme_u_p}
\\ &
\nabla \cdot {\bf u}^{n+1} = 0.
\label{eq:first_order_scheme_nabla_u}
\end{align}
where the boundary conditions are either $(i)$ periodic or $(ii)$ ${\bf u}^{n+1} \cdot {\bf n}|_{\partial \Omega} = 0$.

\subsection{Energy stability}
We next analyze the energy stability of the scheme defined in \eqref{eq:first_order_scheme_Q}-\eqref{eq:first_order_scheme_r}.

\begin{theorem}\label{theorem:discrete_energy_stable}
The scheme \eqref{eq:first_order_scheme_Q}-\eqref{eq:first_order_scheme_r} is unconditionally energy stable in the sense that
\begin{align}
\mathcal{E}^{n+1} - \mathcal{E}^n \leq -  \eta \delta t \left\| \nabla \tilde{\bf u}^{n+1}\right\|^2 
- M \delta t \left\| {\bf G}^{n+1} \right\|^2 ,
\label{eq:dicrete_energy_stability}
\end{align}
where
\begin{align*}
\mathcal{E}^{n+1} = 
\frac{K}{2} \left\| \nabla {\bf Q}^{n+1} \right\|^2 
+\frac{S_{Q}}{2}  \left\| {\bf Q}^{n+1} \right\|^2 
+\frac{1}{2} \left\| {\bf u}^{n+1}\right\|^2 
+ \frac{\delta t^2}{2} \left\| \nabla p^{n+1} \right\|^2
+  |r^{n+1}|^2
- C_0.
\end{align*}
\end{theorem}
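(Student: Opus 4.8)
The plan is to reproduce, at the discrete level, the energy-conservation structure of the continuous model: I would pair each equation of Step 1 with the test function that generates the corresponding part of $\mathcal{E}^{n+1}$, add the resulting identities, cancel all elastic/convective cross terms using the duality between $\sigma$ and $S$, and then absorb the velocity and pressure contributions through the projection step. Throughout I assume the boundary conditions stated for the scheme, so that every integration by parts produces no boundary contribution.

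First I would take the $L^2$ inner product of the ${\bf Q}$-equation \eqref{eq:first_order_scheme_Q} with $-\delta t\,{\bf G}^{n+1}$ and substitute ${\bf G}^{n+1}$ from \eqref{eq:first_order_scheme_G}. Integrating the term $-({\bf Q}^{n+1}-{\bf Q}^n,K\Delta{\bf Q}^{n+1})$ by parts and applying the elementary identity $2(a-b,a)=\|a\|^2-\|b\|^2+\|a-b\|^2$ to the $K\Delta{\bf Q}^{n+1}$ and $-S_Q{\bf Q}^{n+1}$ contributions yields $\tfrac{K}{2}(\|\nabla{\bf Q}^{n+1}\|^2-\|\nabla{\bf Q}^n\|^2)+\tfrac{S_Q}{2}(\|{\bf Q}^{n+1}\|^2-\|{\bf Q}^n\|^2)$ plus the nonnegative remainders $\tfrac{K}{2}\|\nabla({\bf Q}^{n+1}-{\bf Q}^n)\|^2$ and $\tfrac{S_Q}{2}\|{\bf Q}^{n+1}-{\bf Q}^n\|^2$. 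The auxiliary term $-r^{n+1}({\bf Q}^{n+1}-{\bf Q}^n,{\bf V}^n)$ is converted using the $r$-equation \eqref{eq:first_order_scheme_r}, which gives $({\bf V}^n,{\bf Q}^{n+1}-{\bf Q}^n)=2(r^{n+1}-r^n)$, so that this term becomes $|r^{n+1}|^2-|r^n|^2+|r^{n+1}-r^n|^2$. The right-hand side contributes $-M\delta t\|{\bf G}^{n+1}\|^2$, and there remain the coupling terms $\delta t(\tilde{\bf u}^{n+1}\cdot\nabla{\bf Q}^n,{\bf G}^{n+1})-\delta t(S(\nabla\tilde{\bf u}^{n+1},{\bf Q}^n),{\bf G}^{n+1})$.

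Next I would test the momentum equation \eqref{eq:first_order_scheme_uhat} with $\delta t\,\tilde{\bf u}^{n+1}$. Since $\nabla\cdot{\bf u}^n=0$, the skew-symmetry of the convection gives $(({\bf u}^n\cdot\nabla)\tilde{\bf u}^{n+1},\tilde{\bf u}^{n+1})=0$, and the viscous term gives $-\eta\delta t\|\nabla\tilde{\bf u}^{n+1}\|^2$. After integrating the stress term by parts I obtain the cross terms $-\delta t(\sigma({\bf Q}^n,{\bf G}^{n+1}),\nabla\tilde{\bf u}^{n+1})$ and $-\delta t(\nabla{\bf Q}^n:{\bf G}^{n+1},\tilde{\bf u}^{n+1})$, together with the pressure term $-\delta t(\nabla p^n,\tilde{\bf u}^{n+1})$. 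The decisive step is to verify the two structural identities $(\nabla{\bf Q}^n:{\bf G}^{n+1},\tilde{\bf u}^{n+1})=(\tilde{\bf u}^{n+1}\cdot\nabla{\bf Q}^n,{\bf G}^{n+1})$ and $(\sigma({\bf Q}^n,{\bf G}^{n+1}),\nabla\tilde{\bf u}^{n+1})+(S(\nabla\tilde{\bf u}^{n+1},{\bf Q}^n),{\bf G}^{n+1})=0$; once these hold, adding the ${\bf Q}$-test identity to the momentum identity makes every convective and elastic coupling term cancel exactly. For the velocity and pressure part I would use the projection relations \eqref{eq:first_order_scheme_u_p}–\eqref{eq:first_order_scheme_nabla_u}: writing ${\bf u}^{n+1}+\delta t\nabla p^{n+1}=\tilde{\bf u}^{n+1}+\delta t\nabla p^n$, taking squared $L^2$ norms, and using $({\bf u}^{n+1},\nabla p^{n+1})=0$ (from $\nabla\cdot{\bf u}^{n+1}=0$), I can replace $\tfrac12\|\tilde{\bf u}^{n+1}\|^2+\delta t(\nabla p^n,\tilde{\bf u}^{n+1})$ by $\tfrac12\|{\bf u}^{n+1}\|^2+\tfrac{\delta t^2}{2}\|\nabla p^{n+1}\|^2-\tfrac{\delta t^2}{2}\|\nabla p^n\|^2$, which furnishes the kinetic and pressure parts of $\mathcal{E}$ together with the nonnegative remainder $\tfrac12\|\tilde{\bf u}^{n+1}-{\bf u}^n\|^2$.

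Collecting the two tested identities then gives $\mathcal{E}^{n+1}-\mathcal{E}^n$ plus the sum of all the nonnegative remainders on the left, equal to $-\eta\delta t\|\nabla\tilde{\bf u}^{n+1}\|^2-M\delta t\|{\bf G}^{n+1}\|^2$ on the right; discarding the nonnegative remainders yields \eqref{eq:dicrete_energy_stability}. I expect the main obstacle to be the algebraic identity $(\sigma({\bf Q}^n,{\bf G}^{n+1}),\nabla\tilde{\bf u}^{n+1})=-(S(\nabla\tilde{\bf u}^{n+1},{\bf Q}^n),{\bf G}^{n+1})$: establishing it requires expanding the explicit definitions of $\sigma$ and $S$ and checking the pairing term by term, using the symmetry and tracelessness of ${\bf Q}$ and ${\bf G}$, the symmetry of ${\bf D}$ and antisymmetry of ${\bf W}$, and cyclicity of the trace, so that the rotational, co-rotational, and the $a$-dependent contributions each match with the correct sign. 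Everything else is routine SAV bookkeeping and the standard pressure-projection manipulation.
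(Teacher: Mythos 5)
Your proposal is correct and follows essentially the same route as the paper's proof: it tests the ${\bf Q}$- and ${\bf G}$-equations to produce the telescoping $K$, $S_Q$ and $|r|^2$ terms (using \eqref{eq:first_order_scheme_r} for the SAV contribution), tests the momentum equation with $\tilde{\bf u}^{n+1}$, performs the same squaring of ${\bf u}^{n+1}+\delta t\nabla p^{n+1}=\tilde{\bf u}^{n+1}+\delta t\nabla p^{n}$ for the pressure part, and concludes by the exact cancellation of the four coupling terms. The only difference is one of explicitness: the paper asserts that the sum $\left( \tilde{{\bf u}}^{n+1} \cdot \nabla {\bf Q}^{n}, {\bf G}^{n+1} \right) - \left( S(\nabla \tilde{{\bf u}}^{n+1}, {\bf Q}^{n}), {\bf G}^{n+1} \right) + \left( \nabla \cdot \sigma({\bf Q}^n, {\bf G}^{n+1}), \tilde{\bf u}^{n+1} \right) - \left( \nabla {\bf Q}^n : {\bf G}^{n+1}, \tilde{\bf u}^{n+1} \right)$ vanishes without proof, whereas you isolate the two structural identities (the transport/${\bf G}$ duality and the $\sigma$--$S$ duality) that this assertion rests on and note they must be checked term by term from the definitions.
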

\begin{proof}
Taking the $L^2$ inner products of Eq.\,\eqref{eq:first_order_scheme_Q} with ${\bf G}^{n+1}$, 
Eq.\,\eqref{eq:first_order_scheme_G} with $\dfrac{{\bf Q}^{n+1} - {\bf Q}^{n}}{\delta t}$, 
we obtain
\begin{align*}
\left( \frac{{\bf Q}^{n+1} - {\bf Q}^{n}}{\delta t}, {\bf G}^{n+1} \right) 
	+ \left( \tilde{{\bf u}}^{n+1} \cdot \nabla {\bf Q}^{n}, {\bf G}^{n+1} \right)
	- \left( S(\nabla \tilde{{\bf u}}^{n+1}, {\bf Q}^{n}), {\bf G}^{n+1} \right) 
	= M \left\| {\bf G}^{n+1}\right\|^2
\end{align*}
\begin{align*}
\left( {\bf G}^{n+1}, \frac{{\bf Q}^{n+1} - {\bf Q}^{n}}{\delta t} \right) 
	= K \left( \Delta {\bf Q}^{n+1}, \frac{{\bf Q}^{n+1} - {\bf Q}^{n}}{\delta t} \right) 
	- S_Q \left( {\bf Q}^{n+1}, \frac{{\bf Q}^{n+1} - {\bf Q}^{n}}{\delta t} \right) 
	-  r^{n+1} \left( {\bf V}^n, \frac{{\bf Q}^{n+1} - {\bf Q}^{n}}{\delta t} \right)
\end{align*}
Then
\begin{align}\label{eq:energy_Q_H}
&\frac{K}{2\delta t}\left(
	\left\| \nabla {\bf Q}^{n+1}\right\|^2 
		- \left\| \nabla {\bf Q}^{n}\right\|^2
		+ \left\| \nabla {\bf Q}^{n+1} - \nabla {\bf Q}^{n} \right\|^2
	\right)
	+ \frac{S_Q}{2\delta t}\left(
	\left\| {\bf Q}^{n+1}\right\|^2 
		- \left\| {\bf Q}^{n}\right\|^2
		+ \left\| {\bf Q}^{n+1} - {\bf Q}^{n} \right\|^2
	\right)
	+ M \left\| {\bf G}^{n+1}\right\|^2
\notag \\ = &
	\left( \tilde{{\bf u}}^{n+1} \cdot \nabla {\bf Q}^{n}, {\bf G}^{n+1} \right)
	- \left( S(\nabla \tilde{{\bf u}}^{n+1}, {\bf Q}^{n}), {\bf G}^{n+1} \right)
	-  r^{n+1} \left( {\bf V}^n, \frac{{\bf Q}^{n+1} - {\bf Q}^{n}}{\delta t} \right)
\end{align}
Taking the $L^2$ inner products of Eq.\,\eqref{eq:first_order_scheme_uhat} with $\tilde{\bf u}^{n+1}$
leads to
\begin{align*}
\left( \frac{\tilde{\bf u}^{n+1} - {\bf u}^{n}}{\delta t},\tilde{\bf u}^{n+1} \right)  
	+ \left(({\bf u}^n \cdot \nabla )\tilde{\bf u}^{n+1}, \tilde{\bf u}^{n+1} \right)
	= &
		-\eta \left\|\nabla \tilde{\bf u}^{n+1}\right\|^2 
		- \left( \nabla p^n, \tilde{\bf u}^{n+1} \right)
	\\ & 
		+ \left( \nabla \cdot \sigma({\bf Q}^n, {\bf G}^{n+1}), \tilde{\bf u}^{n+1} \right)
		- \left( \nabla {\bf Q}^n : {\bf G}^{n+1}, \tilde{\bf u}^{n+1} \right)
\end{align*}
Note Eq.\,\eqref{eq:first_order_scheme_u_p}, we have
\begin{align}\label{eq:energy_u_p_1}
{\bf u}^{n+1} + \delta t \nabla p^{n+1}  = \tilde{\bf u}^{n+1} + \delta t\nabla p^{n} 
\end{align}
Squaring both sides of Eq.\,\eqref{eq:energy_u_p_1} 
	and noticing that 
		$(\nabla p^{n+1}, {\bf u}^{n+1}) = -(p^{n+1}, \nabla \cdot {\bf u}^{n+1}) = 0$,
	we have
\begin{align*}
\left\| {\bf u}^{n+1} \right\|^2 + (\delta t)^2 \left\| \nabla p^{n+1} \right\|^2
	= \left\| \tilde{\bf u}^{n+1} \right\|^2 + (\delta t)^2 \left\| \nabla p^{n} \right\|^2
		+ 2\delta t\left( \tilde{\bf u}^{n+1}, \nabla p^{n} \right)
\end{align*}
Thus
\begin{align}\label{eq:energy_u}
&\frac{1}{2\delta t}\left( 
		\left\|{\bf u}^{n+1} \right\|^2 - \left\|{\bf u}^{n} \right\|^2
		+ \left\|\tilde{\bf u}^{n+1}-{\bf u}^{n} \right\|^2 
		\right)
	+ \frac{\delta t}{2} \left\| \nabla p^{n+1} \right\|^2
	- \frac{\delta t}{2} \left\| \nabla p^{n} \right\|^2
	+ \eta \left\|\nabla \tilde{\bf u}^{n+1}\right\|^2 
\notag \\ & \qquad =
	\left( \nabla \cdot \sigma({\bf Q}^n, {\bf G}^{n+1}), \tilde{\bf u}^{n+1} \right)
		- \left( \nabla {\bf Q}^n : {\bf G}^{n+1}, \tilde{\bf u}^{n+1} \right)
\end{align}
Taking the product of Eq.\,\eqref{eq:first_order_scheme_r} with $2r^{n+1}$ gives
\begin{align}\label{eq:energy_r}
\frac{1}{\delta t} \left( |r^{n+1}|^2 - |r^n|^2 + |r^{n+1} - r^n|^2 \right) 
	= r^{n+1} \left( {\bf V}^n, \frac{{\bf Q}^{n+1} - {\bf Q}^{n}}{\delta t} \right)
\end{align}
Combining Eqs.\,\eqref{eq:energy_Q_H}, \eqref{eq:energy_u} and \eqref{eq:energy_r}, we have
\begin{align*}
&\frac{K}{2\delta t}\left(
	\left\| \nabla {\bf Q}^{n+1}\right\|^2 
		- \left\| \nabla {\bf Q}^{n}\right\|^2
		+ \left\| \nabla {\bf Q}^{n+1} - \nabla {\bf Q}^{n} \right\|^2
	\right)
	+ \frac{S_Q}{2\delta t}\left(
	\left\| {\bf Q}^{n+1}\right\|^2 
		- \left\| {\bf Q}^{n}\right\|^2
		+ \left\| {\bf Q}^{n+1} - {\bf Q}^{n} \right\|^2
	\right)
	+ M \left\| {\bf G}^{n+1}\right\|^2
\\ & 
+ \frac{1}{2\delta t}\left( 
		\left\|{\bf u}^{n+1} \right\|^2 - \left\|{\bf u}^{n} \right\|^2
		+ \left\|\tilde{\bf u}^{n+1}-{\bf u}^{n} \right\|^2 
		\right)
	+ \frac{\delta t}{2} \left\| \nabla p^{n+1} \right\|^2
	- \frac{\delta t}{2} \left\| \nabla p^{n} \right\|^2
	+ \eta \left\|\nabla \tilde{\bf u}^{n+1}\right\|^2 
\\ &
	+\frac{1}{\delta t} \left( |r^{n+1}|^2 - |r^n|^2 + |r^{n+1} - r^n|^2 \right)
\\ = &
	\left( \tilde{{\bf u}}^{n+1} \cdot \nabla {\bf Q}^{n}, {\bf G}^{n+1} \right)
	- \left( S(\nabla \tilde{{\bf u}}^{n+1}, {\bf Q}^{n}), {\bf G}^{n+1} \right)
	+ \left( \nabla \cdot \sigma({\bf Q}^n, {\bf G}^{n+1}), \tilde{\bf u}^{n+1} \right)
		- \left( \nabla {\bf Q}^n : {\bf G}^{n+1}, \tilde{\bf u}^{n+1} \right)
	= 0.
\end{align*}
This concludes the proof.
\end{proof}

\subsection{Unique solvability}
The Eqs.\,\eqref{eq:first_order_scheme_Q}-\eqref{eq:first_order_scheme_uhat} 
show the scheme is linear with respect to unknowns ${\bf Q}^{n+1}$ and $\tilde{\bf u}^{n+1}$.
We shall prove that the solution of this system is unique. 
\begin{theorem}
The scheme defined in \eqref{eq:first_order_scheme_Q}-\eqref{eq:first_order_scheme_uhat} 
admits a unique solution.
\end{theorem}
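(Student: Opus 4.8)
The plan is to use linearity. Equations \eqref{eq:first_order_scheme_Q}--\eqref{eq:first_order_scheme_uhat} form a linear system for $({\bf Q}^{n+1}, r^{n+1}, \tilde{\bf u}^{n+1})$ (with ${\bf G}^{n+1}$ slaved to ${\bf Q}^{n+1}$ and $r^{n+1}$ through \eqref{eq:first_order_scheme_G}), the data ${\bf Q}^n, {\bf u}^n, p^n, r^n$ being fixed. Hence it is enough to show that the homogeneous system, obtained as the difference $({\bf Q}, r, \tilde{\bf u}, {\bf G})$ of two putative solutions -- in which every term carrying the old time level cancels -- has only the trivial solution.

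First I would repeat, at the homogeneous level, the test-function choices from the proof of Theorem \ref{theorem:discrete_energy_stable}. Pairing the homogeneous \eqref{eq:first_order_scheme_Q} with ${\bf G}$ and the homogeneous \eqref{eq:first_order_scheme_G} with ${\bf Q}/\delta t$, integrating the Laplacian by parts (the boundary contribution vanishes under either boundary condition, since the difference inherits homogeneous data) and subtracting, I obtain
\begin{align*}
M\|{\bf G}\|^2 + \frac{K}{\delta t}\|\nabla{\bf Q}\|^2 + \frac{S_Q}{\delta t}\|{\bf Q}\|^2 + \frac{r}{\delta t}({\bf V}^n, {\bf Q})
= (\tilde{\bf u}\cdot\nabla{\bf Q}^n, {\bf G}) - (S(\nabla\tilde{\bf u}, {\bf Q}^n), {\bf G}).
\end{align*}
The homogeneous form of \eqref{eq:first_order_scheme_r} reads $({\bf V}^n, {\bf Q}) = 2r$, so the fourth term on the left is $2r^2/\delta t \ge 0$; this is exactly how the scalar auxiliary variable gets controlled. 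Pairing the homogeneous \eqref{eq:first_order_scheme_uhat} with $\tilde{\bf u}$, and using that $\nabla\cdot{\bf u}^n=0$ (guaranteed by \eqref{eq:first_order_scheme_nabla_u} at the previous level) to annihilate the convection term, gives
\begin{align*}
\frac{1}{\delta t}\|\tilde{\bf u}\|^2 + \eta\|\nabla\tilde{\bf u}\|^2
= (\nabla\cdot\sigma({\bf Q}^n, {\bf G}), \tilde{\bf u}) - (\nabla{\bf Q}^n:{\bf G}, \tilde{\bf u}).
\end{align*}

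Adding the two identities, the four coupling terms on their right-hand sides are precisely the combination shown to vanish in the proof of Theorem \ref{theorem:discrete_energy_stable}, so what survives is
\begin{align*}
M\|{\bf G}\|^2 + \frac{K}{\delta t}\|\nabla{\bf Q}\|^2 + \frac{S_Q}{\delta t}\|{\bf Q}\|^2 + \frac{2r^2}{\delta t} + \frac{1}{\delta t}\|\tilde{\bf u}\|^2 + \eta\|\nabla\tilde{\bf u}\|^2 = 0.
\end{align*}
Because $K, S_Q, M, \eta>0$, every non-negative term must vanish, so ${\bf Q}=0$, $r=0$, ${\bf G}=0$ and $\tilde{\bf u}=0$, and two solutions coincide. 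This identity is also a coercivity estimate for the underlying bilinear form, whence existence follows by Lax--Milgram (or, in the fully discrete finite-dimensional setting, the trivial kernel of a square system already yields solvability).

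I expect the main obstacle to be the middle step: confirming that the four coupling terms genuinely cancel rather than needing to be bounded. This is not an estimate but an exact algebraic identity, resting on the duality $(\sigma({\bf Q}^n, {\bf G}), \nabla\tilde{\bf u}) = (S(\nabla\tilde{\bf u}, {\bf Q}^n), {\bf G})$ between the elastic stress and the Gordon--Schowalter term, together with the pointwise relation $(\tilde{\bf u}\cdot\nabla{\bf Q}^n, {\bf G}) = (\nabla{\bf Q}^n:{\bf G}, \tilde{\bf u})$. The care lies in verifying that the homogeneous problem inherits this cancellation unchanged; it does, because the transport and stress couplings are linear in the unknowns and evaluated at the frozen coefficient ${\bf Q}^n$, so the same integrations by parts used for energy stability carry over verbatim.
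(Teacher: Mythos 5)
Your proposal is correct and follows essentially the same route as the paper: both reduce uniqueness to the homogeneous system satisfied by the difference of two solutions, test the ${\bf Q}$-, ${\bf G}$- and velocity equations with ${\bf G}_0$, ${\bf Q}_0$ (up to the trivial $1/\delta t$ scaling) and $\tilde{\bf u}_0$, use $r_0=\tfrac12\left({\bf V}^n,{\bf Q}_0\right)$ to turn the SAV term into $2|r_0|^2/\delta t$, and invoke the same four-term cancellation already established in the energy-stability proof to conclude that a sum of nonnegative terms vanishes. Your closing remark that the resulting identity is a coercivity statement yielding existence (trivial kernel of a linear square system, or Lax--Milgram) is a worthwhile addition, since the paper's proof as written establishes only uniqueness even though the theorem asserts existence as well.
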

\begin{proof}
Assume
that for given ${\bf Q}^{n}$, ${\bf Q}^{n-1}$, $\tilde{\bf u}^{n}$ and $\tilde{\bf u}^{n-1}$,
the system has two solutions $\left( {\bf Q}_1, \tilde{\bf u}_1 \right)$ 
	and $\left( {\bf Q}_2, \tilde{\bf u}_2 \right)$.
Let ${\bf Q}_0 = {\bf Q}_1 - {\bf Q}_2$, 
	$\tilde{\bf u}_0 = \tilde{\bf u}_1  - \tilde{\bf u}_2$,
	we have
\begin{align}
&
\frac{{\bf Q}_0}{\delta t}
	+ \tilde{{\bf u}}_0 \cdot \nabla {\bf Q}^{n}
	- S(\nabla \tilde{{\bf u}}_0, {\bf Q}^{n}) = M {\bf G}_0,
	\label{eq:unique_Q}  
\\    &
\frac{\tilde{\bf u}_0}{\delta t} + ({\bf u}^n \cdot \nabla )\tilde{\bf u}_0
	= \eta \Delta \tilde{\bf u}_0  + \nabla \cdot \sigma({\bf Q}^n, {\bf G}_0)
	- \nabla {\bf Q}^n : {\bf G}_0,
	\label{eq:unique_u} 
\end{align}
where
\begin{align}
&{\bf G}_0 = K\Delta {\bf Q}_0  -S_Q {\bf Q}_0 - r_0 {\bf V}^n, 
\label{eq:unique_G} 
\\ &
r_0 = \frac{1}{2}\left( {\bf V}^n, {\bf Q}_0 \right). 
\label{eq:unique_r} 
\end{align}
Taking the $L^2$-inner products of Eq.\,\eqref{eq:unique_Q} with ${\bf G}^0$, 
	Eq.\,\eqref{eq:unique_G} with ${\bf Q}^0$ and 
	Eq.\,\eqref{eq:unique_u} with $\tilde{\bf u}^0$,
	and using the relationship \eqref{eq:unique_r}, we can obtain
\begin{align*}
&
\frac{1}{\delta t}\left( {\bf Q}_0, {\bf G}_0 \right)
	+ \left( \tilde{{\bf u}}_0 \cdot \nabla {\bf Q}^{n}, {\bf G}_0 \right)
	- \left( S(\nabla \tilde{{\bf u}}_0, {\bf Q}^{n}), {\bf G}_0 \right) 
	= M \left\| {\bf G}_0\right\|^2,
\\ &
\left( {\bf G}_0, {\bf Q}_0\right) = 
	-K \left\| \nabla{\bf Q}_0\right\|^2  -S_Q \left\| {\bf Q}_0\right\|^2  
	- 2|r_0|^2,
\\ &
\frac{1}{\delta t}\left\| \tilde{\bf u}_0 \right\|^2 
	= -\eta \left\| \nabla \tilde{\bf u}_0\right\|^2  
		+ \left( \nabla \cdot \sigma({\bf Q}^n, {\bf G}_0), \tilde{\bf u}_0 \right)
		- \left( \nabla {\bf Q}^n : {\bf G}_0, \tilde{\bf u}_0 \right).
\end{align*}
The three equations above yield
\begin{align*}
\frac{K}{\delta t} \left\| \nabla{\bf Q}_0\right\|^2  +\frac{S_Q}{\delta t} \left\| {\bf Q}_0\right\|^2  
	+ \frac{2}{\delta t}|r_0|^2 
	+ \frac{1}{\delta t}\left\| \tilde{\bf u}_0 \right\|^2 
+ 
	+ M \left\| {\bf G}_0\right\|^2 + \eta \left\| \nabla \tilde{\bf u}_0\right\|^2 
	= 0,
\end{align*}
so that ${\bf Q}_0 = 0$, $\tilde{\bf u}_0 = 0$.
\end{proof}

\section{Error estimate}
\label{sec:Error_estimate}
In this section, 
	we aim to give error estimates for the scheme\eqref{eq:first_order_scheme_Q}-\eqref{eq:first_order_scheme_nabla_u}. 
Our final results of the error estimates for phase function ${\bf Q}$, chemical potential ${\bf G}$, the velocity {\bf u} are stated in the Theorem\ref{theorem:error_estimates}.
The key point of error estimate lies in the boundness of $\left\| {\bf Q}^n \right\|_{L^{\infty}}$,
which simplifies the estimates of some complex nonlinear terms.
Here, we couple the proof of $L^{\infty}$ boundedness and error estimates, 
and use mathematical induction to derive the results.
For the sake of simplicity, 
we use the second boundary condition in our proof process, i.e., 
\begin{align}\label{boundary:dirichlet}
{\bf u}^{n+1}\cdot \mathbf{n} = 0,
\tilde{\bf u}^{n+1}|_{\partial \Omega} = 0,
{\bf Q}^{n+1}|_{\partial \Omega} = {\bf Q}^0 
\ or \
\partial_n {\bf Q}^{n+1}|_{\partial \Omega} = 0. 
\end{align} 
In fact, our conclusion is true for the periodic boundary.

We denote that
\begin{align*}
e_Q^n = {\bf Q}^n - {\bf Q}(t^n), 
	\quad 
	e_G^n = {\bf G}^n - {\bf G}(t^n),
	\quad
	e_r^n = r^n - r(t^n),
\\ 
e_u^n = {\bf u}^n - {\bf u}(t^n),
	\quad
	\tilde{e}_u^n = \tilde{\bf u}^n - {\bf u}(t^n), 
	\quad
	e_p^n = p^n - p(t^n).
\end{align*}
Theorem \ref{theorem:discrete_energy_stable} imply that
	the functions ${\bf Q}^n$, $\tilde{\bf u}^n$ and $r^n$ are bounded 
	as follows:
	\begin{align*}
		\left\| \tilde{\bf u}^n \right\|, 
		\left\| {\bf Q}^n \right\|_{H^1}, 
		|r^n| \leq C, \quad 0 \leq n \leq N,
	\end{align*}
	where $N = T/\delta t$ and $C$ depends on $\Omega$ and the initial conditions.

Supposing that the exact solutions ${\bf Q}$, ${\bf u}$, $p$ and $r$ 
for the system possess the following regularity:
\begin{equation}\label{assumptions_for_exact_solution}
\begin{aligned}
&
	{\bf Q} \in L^{\infty}(0,T;H^2)\cap L^{\infty}(0,T;H^3), 
	\qquad
	{\bf u} \in L^{\infty}(0,T;H^2),
\\ &
	{\bf Q}_t \in L^{\infty}(0,T;H^2),
	\qquad
	{\bf u}_t \in L^{\infty}(0,T;H^1),
	\qquad
	p_t \in L^{\infty}(0,T;H^1),
\\ &
	{\bf Q}_{tt} \in L^{\infty}(0,T;L^2),
	\qquad
	{\bf u}_{tt} \in L^{\infty}(0,T;L^2),
	\qquad
	r_{tt} \in L^{2}(0,T).
\\
	\end{aligned}
\end{equation}
The regularity hypotheses for the scalar auxiliary variable $r(t)$ are not necessary. Instead,
we can obtain through the regularity of the exact solutions \eqref{assumptions_for_exact_solution}.
\begin{lemma}
Under the regularity hypotheses\eqref{assumptions_for_exact_solution}, 
	we have $r_{tt} \in L^2(0,T)$.
\end{lemma}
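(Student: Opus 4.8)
The plan is to differentiate the identity $r_t=\tfrac12({\bf V},{\bf Q}_t)$ once more in time and to bound every resulting term by the regularity data in \eqref{assumptions_for_exact_solution}. Since $[0,T]$ is a bounded interval, it suffices to show that $r_{tt}$ is bounded in $L^\infty(0,T)$, which is in fact what the argument produces. Recall that $r=\sqrt{\mathcal{E}_1}$ with $\mathcal{E}_1=\int_\Omega\big(F_B({\bf Q})-\tfrac{S_Q}{2}tr({\bf Q}^2)\big)dx+C_0$ and $g({\bf Q})=f_B({\bf Q})-S_Q{\bf Q}$, so that $\mathcal{E}_1$ is a quartic energy whose variational derivative is $g$; in particular $\tfrac{d}{dt}\mathcal{E}_1=(g({\bf Q}),{\bf Q}_t)$, which reproduces $r_t=\tfrac{(g({\bf Q}),{\bf Q}_t)}{2\sqrt{\mathcal{E}_1}}=\tfrac12({\bf V},{\bf Q}_t)$.

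Differentiating once more gives
$$r_{tt}=\tfrac12({\bf V}_t,{\bf Q}_t)+\tfrac12({\bf V},{\bf Q}_{tt}),\qquad {\bf V}_t=\frac{g'({\bf Q}):{\bf Q}_t}{\sqrt{\mathcal{E}_1}}-\frac{g({\bf Q})}{2\,\mathcal{E}_1^{3/2}}\big(g({\bf Q}),{\bf Q}_t\big),$$
where $g'({\bf Q})$ denotes the (quadratic) derivative of the cubic map $g$. Thus $r_{tt}$ is a finite sum of inner products, each of which I would handle by Cauchy--Schwarz together with pointwise polynomial bounds on $g$ and $g'$.

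For the estimates I would invoke the Sobolev embedding $H^2(\Omega)\hookrightarrow L^\infty(\Omega)$, valid in three dimensions: since ${\bf Q}\in L^\infty(0,T;H^2)$, the tensor ${\bf Q}$ is bounded pointwise, uniformly in $t$, and hence so are the polynomial quantities $g({\bf Q})$ and $g'({\bf Q})$. Consequently $\|{\bf V}\|\le C\|g({\bf Q})\|\le C$, while $\|{\bf V}_t\|\le C\|{\bf Q}_t\|$ follows after using $|(g({\bf Q}),{\bf Q}_t)|\le\|g({\bf Q})\|\,\|{\bf Q}_t\|$. Pairing these with ${\bf Q}_t\in L^\infty(0,T;L^2)$ and ${\bf Q}_{tt}\in L^\infty(0,T;L^2)$ bounds $({\bf V}_t,{\bf Q}_t)$ and $({\bf V},{\bf Q}_{tt})$ uniformly in $t$, so that $r_{tt}\in L^\infty(0,T)\subset L^2(0,T)$.

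The one point that genuinely needs care is that $\mathcal{E}_1$ sits in the denominators $\sqrt{\mathcal{E}_1}$ and $\mathcal{E}_1^{3/2}$, so the argument requires $\mathcal{E}_1$ to be bounded strictly below by a positive constant rather than merely $\mathcal{E}_1\ge 0$. This is exactly the role of the shift constant $C_0$: choosing $C_0$ large enough guarantees $\mathcal{E}_1({\bf Q}(t))\ge c_0>0$ for all $t\in[0,T]$ (again using the uniform $L^\infty$ bound on ${\bf Q}$), after which all the $\mathcal{E}_1^{-1/2}$ and $\mathcal{E}_1^{-3/2}$ factors are uniformly controlled. Everything else reduces to Cauchy--Schwarz and the polynomial structure of $f_B$, so securing this lower bound is the only real obstacle.
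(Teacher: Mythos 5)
Your proof is correct. The paper in fact states this lemma with no proof at all, so there is nothing to compare against; your argument --- differentiating $r_t=\tfrac12({\bf V},{\bf Q}_t)$ once more, bounding ${\bf V}$ and ${\bf V}_t$ through the embedding $H^2\hookrightarrow L^\infty$ and the polynomial structure of $g$, and then pairing with ${\bf Q}_t,{\bf Q}_{tt}\in L^\infty(0,T;L^2)$ to get $r_{tt}\in L^\infty(0,T)\subset L^2(0,T)$ --- is precisely the natural completion of what the authors left implicit. Your point about the denominator is also well taken: the paper only asserts $\mathcal{E}_1\ge 0$, but the SAV formulation itself (the very definition of ${\bf V}=g({\bf Q})/\sqrt{\mathcal{E}_1({\bf Q})}$) already presupposes a strict lower bound $\mathcal{E}_1\ge c_0>0$; this is available because $\gamma>0$ makes the integrand $F_B({\bf Q})-\tfrac{S_Q}{2}tr({\bf Q}^2)$ bounded below pointwise, so a sufficiently large $C_0$ yields the uniform positive lower bound your estimates require.
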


\begin{lemma}\label{lemma:infty}
Assuming the assumption \eqref{assumptions_for_exact_solution} holds,
then there exists positive constant $\delta t_0$, and for any $\delta t < \delta t_0$, the
solutions ${\bf Q}^n$ satisfy
\begin{align*}
\left\| {\bf Q}^n \right\|_{L^{\infty}} \leq \mathcal{C} 
	= \mathop{max}\limits_{0\leq t \leq T} \left\| {\bf Q}(t) \right\|_{L^{\infty}} + 2.
\end{align*}
where $n = 0,1,2,...,N$.
\end{lemma}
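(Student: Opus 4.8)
The plan is to prove the bound by induction on $n$, coupling it with the $L^2$/$H^1$ error estimates of Theorem~\ref{theorem:error_estimates} so that each bootstraps the other. The base case is immediate: since ${\bf Q}^0 = {\bf Q}_0 = {\bf Q}(0)$, we have $\|{\bf Q}^0\|_{L^{\infty}} = \|{\bf Q}(0)\|_{L^{\infty}} \leq \mathcal{C}-2 < \mathcal{C}$. For the inductive step I would assume $\|{\bf Q}^k\|_{L^{\infty}}\leq \mathcal{C}$ for all $0\leq k\leq n$ and deduce the same bound at level $n+1$. The first payoff of this hypothesis is that the polynomial nonlinearities $f_B({\bf Q}^k)$, $g({\bf Q}^k)$ and ${\bf V}^k = {\bf V}({\bf Q}^k)$ become uniformly bounded in $L^{\infty}(\Omega)$, hence in every $L^p$, and the maps ${\bf Q}\mapsto{\bf V}({\bf Q})$, ${\bf Q}\mapsto g({\bf Q})$ are Lipschitz on the ball $\{\|{\bf Q}\|_{L^{\infty}}\leq \mathcal{C}\}$ with a constant depending only on $\mathcal{C}$ and the model parameters. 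This is exactly what is needed to control the nonlinear error terms uniformly in $\delta t$.

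Next I would subtract the scheme \eqref{eq:first_order_scheme_Q}--\eqref{eq:first_order_scheme_nabla_u} from the exact system evaluated at $t^{n+1}$ and Taylor-expanded in time, producing error equations for $(e_Q,e_G,e_r,\tilde e_u,e_u,e_p)$ whose local truncation residuals are $O(\delta t)$ in $L^2$ by the regularity \eqref{assumptions_for_exact_solution} (in particular ${\bf Q}_{tt},{\bf u}_{tt}\in L^{\infty}(0,T;L^2)$ and $r_{tt}\in L^2(0,T)$). I then mirror the energy argument of Theorem~\ref{theorem:discrete_energy_stable} on these error equations: pairing the $e_Q$-equation with $e_G^{n+1}$, the $e_G$-equation with $(e_Q^{n+1}-e_Q^n)/\delta t$, the velocity error equation with $\tilde e_u^{n+1}$, and the $r$-equation with $2e_r^{n+1}$, so that the skew transport and stress terms cancel as in the stability proof. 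The surviving nonlinear differences from $S$, $\sigma$, the convection and $r{\bf V}$ are handled with H\"older's inequality, the Lipschitz and boundedness facts above, Sobolev embeddings and Young's inequality, using the uniform bounds $\|\tilde{\bf u}^n\|,\|{\bf Q}^n\|_{H^1},|r^n|\leq C$ already furnished by Theorem~\ref{theorem:discrete_energy_stable}. Summing over time levels and invoking the discrete Gronwall inequality would yield
\begin{align*}
\|e_Q^{n+1}\|_{H^1}^2 + \|e_u^{n+1}\|^2 + |e_r^{n+1}|^2 + \delta t\sum_{k=0}^{n}\|e_G^{k+1}\|^2 \leq C\,\delta t^2,
\end{align*}
with $C$ independent of $\delta t$ (depending only on $\mathcal{C}$, $T$ and the exact-solution norms).

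To reach an $L^{\infty}$ conclusion in three dimensions I would upgrade this to an $H^2$ bound. Writing the discrete molecular-field relation \eqref{eq:first_order_scheme_G} in error form gives $K\Delta e_Q^{n+1} = e_G^{n+1} + S_Q e_Q^{n+1} + (r^{n+1}{\bf V}^n - r(t^{n+1}){\bf V}(t^{n+1}))$, and elliptic regularity together with the $L^2$ estimates just obtained yields $\|e_Q^{n+1}\|_{H^2}\leq C\delta t$. Closing the induction then uses the embedding $H^2(\Omega)\hookrightarrow L^{\infty}(\Omega)$ valid in 3D:
\begin{align*}
\|{\bf Q}^{n+1}\|_{L^{\infty}} \leq \|{\bf Q}(t^{n+1})\|_{L^{\infty}} + \|e_Q^{n+1}\|_{L^{\infty}} \leq (\mathcal{C}-2) + C_S\|e_Q^{n+1}\|_{H^2} \leq (\mathcal{C}-2) + C_S C\,\delta t,
\end{align*}
so that choosing $\delta t_0 = 2/(C_S C)$ forces $\|{\bf Q}^{n+1}\|_{L^{\infty}}\leq \mathcal{C}$ for every $\delta t<\delta t_0$, completing the induction.

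The hard part, I expect, will be the coupled nonlinear strain and stress contributions $S(\nabla\tilde{\bf u}^{n+1},{\bf Q}^n)$ and $\sigma({\bf Q}^n,{\bf G}^{n+1})$ together with the convection terms: these mix $\nabla{\bf u}$ with ${\bf Q}$ and $\nabla{\bf Q}$, so bounding their error differences demands both the induction $L^{\infty}$-bound on ${\bf Q}^n$ and enough regularity to control $\|\nabla{\bf Q}^n\|$ and $\|\nabla\tilde{\bf u}^{n+1}\|$ without generating a factor that blows up as $\delta t\to 0$. Keeping the Gronwall constant uniform in $\delta t$ while performing the $H^2$ elliptic upgrade, and checking that the projection step \eqref{eq:first_order_scheme_u_p}--\eqref{eq:first_order_scheme_nabla_u} does not degrade the velocity/pressure error order, is the delicate bookkeeping at the heart of the argument; the apparent circularity between needing the error estimate to bound $L^{\infty}$ and needing $L^{\infty}$ to close the error estimate is precisely what the simultaneous induction resolves.
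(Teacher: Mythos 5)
Your proposal follows essentially the same route as the paper: simultaneous induction on the $L^{\infty}$ bound and the error estimate, error equations paired exactly as in the energy-stability proof (the $e_Q$-equation with $e_G^{n+1}$, the $e_G$-equation with $(e_Q^{n+1}-e_Q^n)/\delta t$, the velocity error with $\tilde e_u^{n+1}$, the $r$-error with $2e_r^{n+1}$), discrete Gronwall, an elliptic $H^2$ upgrade through the molecular-field relation, and $H^2(\Omega)\hookrightarrow L^{\infty}(\Omega)$ to close the induction. The base case, the cancellation structure, and the Gronwall output you state coincide with the paper's Steps 1--4.

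The one genuine slip is quantitative: you claim $\left\| e_Q^{n+1} \right\|_{H^2} \leq C\delta t$, and this does not follow from the estimates you derived. The Gronwall step controls $e_G$ only through the time-summed quantity $\delta t\sum_{k=0}^{n}\left\| e_G^{k+1}\right\|^2 \leq C\delta t^2$; at a fixed time level this yields only $\left\| e_G^{n+1}\right\| \leq C\delta t^{1/2}$, and since $e_G^{n+1}$ sits on the right-hand side of the elliptic relation $K\Delta e_Q^{n+1} = e_G^{n+1} + S_Q e_Q^{n+1} + \left( r^{n+1}{\bf V}^n - r(t^{n+1}){\bf V}(t^{n+1})\right)$, the best available bound is $\left\| e_Q^{n+1}\right\|_{H^2} \leq C\delta t^{1/2}$ --- which is exactly what the paper proves in its Step 5. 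This is harmless for the induction: you still get $\left\|{\bf Q}^{n+1}\right\|_{L^{\infty}} \leq (\mathcal{C}-2) + C_1\delta t^{1/2}$, and it suffices to take the threshold quadratically, e.g.\ $\delta t_0 = \min\{\delta t_1,\, 4/C_1^2\}$ (the paper uses $1/C_1^2$), rather than your linear-in-$\delta t$ choice $\delta t_0 = 2/(C_S C)$. So the architecture of your argument is sound; only the claimed rate in the $H^2$ step, and correspondingly the definition of $\delta t_0$, needs correction.
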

\begin{proof}
We prove this lemma with mathematical induction.
When $n = 0$, we choose ${\bf Q}^0 = {\bf Q}(t^0)$. 
Supposing that $\left\| {\bf Q}^n \right\|_{L^{\infty}} \leq \mathcal{C}$ is valid for $n = 0,1,2,...,k$, 
we prove $\left\| {\bf Q}^{k+1} \right\|_{L^{\infty}} \leq \mathcal{C}$ is also valid 
in 5 Steps.


\textbf{Step 1.}
Subtracting Eqs.\,\eqref{eq:equal_model} from Eqs.\,\eqref{eq:first_order_scheme_Q}-\eqref{eq:first_order_scheme_u_p},
we can get the error equations as follows.
\begin{align}
&\frac{e_{Q}^{n+1} - e_{ Q}^{n}}{\delta t} 
	+ \tilde{{\bf u}}^{n+1} \cdot \nabla {\bf Q}^{n} - {\bf u}(t^{n+1}) \cdot \nabla {\bf Q}(t^{n})
\notag \\ & \qquad \qquad \qquad
	- S(\nabla \tilde{{\bf u}}^{n+1}, {\bf Q}^{n}) + S(\nabla{\bf u}(t^{n+1}), {\bf Q}(t^{n})) 
	= M e_{G}^{n+1} + R_Q^{n+1},
\label{eq:error_first_order_scheme_Q}
\\ &
e_{G}^{n+1} = K \Delta e_{Q}^{n+1} - S_Q e_{Q}^{n+1} -  r^{n+1}{\bf V}^n + r(t^{n+1}){\bf V}(t^{n}) + R_G^{n+1},
\label{eq:error_first_order_scheme_G}
\\ &
\frac{\tilde{e}_u^{n+1} - {e}_u^{n}}{\delta t} 
	+ ({\bf u}^n \cdot \nabla )\tilde{\bf u}^{n+1} - ({\bf u}(t^{n}) \cdot \nabla ){\bf u}(t^{n+1})
\notag \\ & \qquad \qquad \qquad
	= \eta \Delta \tilde{e}_u^{n+1} - \nabla p^n + \nabla p(t^{n+1}) 
		+ \nabla \cdot \sigma({\bf Q}^n, {\bf G}^{n+1}) - \nabla \cdot \sigma({\bf Q}(t^{n}), {\bf G}(t^{n+1}))
\notag \\ & \qquad \qquad \qquad
	- \nabla {\bf Q}^n : {\bf G}^{n+1} + \nabla {\bf Q}(t^{n}) : {\bf G}(t^{n+1}) + R_{u}^{n+1},
\label{eq:error_first_order_scheme_uhat}
\\ &
\frac{e_{ u}^{n+1} - \tilde{e}_u^{n+1} }{\delta t} + \nabla (p^{n+1} - p^n) = 0,
\label{eq:error_first_order_scheme_u_p}
\\ &
\frac{e_r^{n+1} - e_r^n}{\delta t} = \frac{1}{2}\int_{\Omega} \left(
		{\bf V}^n :\frac{{\bf Q}^{n+1} - {\bf Q}^{n}}{\delta t} 
									- {\bf V}(t^{n}):\frac{{\bf Q}(t^{n+1})- {\bf Q}(t^{n})}{\delta t} 
								\right) dx
		+  R_r^{n+1}.
\label{eq:error_first_order_scheme_r}
\end{align}
where
\begin{align*}
R_Q^{n+1} &= -{\bf u}(t^{n+1}) \cdot \nabla {\bf Q}(t^{n}) + {\bf u}(t^{n+1}) \cdot \nabla {\bf Q}(t^{n+1}) 
	+ S(\nabla{\bf u}(t^{n+1}), {\bf Q}(t^{n})) - S(\nabla{\bf u}(t^{n+1}), {\bf Q}(t^{n+1})) 
	\\ &
	- \frac{{\bf Q}(t^{n+1}) - {\bf Q}(t^{n})}{\delta t} + {\bf Q}_t(t^{n+1}) , 
\\
R_G^{n+1} &= r(t^{n+1}){\bf V}(t^{n+1}) - r(t^{n+1}){\bf V}(t^{n}), 
\\ 
R_u^{n+1} & 
	= R_{u1}^{n+1} + \nabla \cdot R_{u2}^{n+1},
\\
R_{u1}^{n+1} & = 
	- \frac{{\bf u}(t^{n+1}) - {\bf u}(t^{n})}{\delta t} + {\bf u}_t(t^{n+1})
	- ({\bf u}(t^{n}) \cdot \nabla ){\bf u}(t^{n+1}) 
		+ ({\bf u}(t^{n+1}) \cdot \nabla ){\bf u}(t^{n+1})
\\ &
	- \nabla {\bf Q}(t^{n}) : {\bf G}(t^{n+1})
	+ \nabla {\bf Q}(t^{n+1}) : {\bf G}(t^{n+1}), 
\\ 
R_{u2}^{n+1} & = 
	\nabla \cdot \sigma({\bf Q}(t^{n}), {\bf G}(t^{n+1}))
	- \nabla \cdot \sigma({\bf Q}(t^{n+1}), {\bf G}(t^{n+1}))
\\
R_r^{n+1} &= \frac{1}{2}\int_{\Omega} \left(
	{\bf V}(t^{n}):\frac{{\bf Q}(t^{n+1})- {\bf Q}(t^{n})}{\delta t} 
	- {\bf V}(t^{n+1}):{\bf Q}_t (t^{n+1})
	\right) dx	
			- \frac{{r(t^{n+1}) - r(t^{n})}}{\delta t} + r_t(t^{n+1}).	
\end{align*}

\begin{lemma}\label{lemma:R_Q_G_u}
Under assumption \eqref{assumptions_for_exact_solution}, there holds
\begin{align*}
\left\| R_Q^{n+1} \right\| +
		\left\| R_G^{n+1} \right\|_{H^1} +
		\left\| R_{u1}^{n+1} \right\| +
		\left\| R_{u2}^{n+1} \right\| \leq C \delta t,
	\qquad 
		| R_r^{n+1} | \leq C 
			| r_{tt} |_{L^2(t^n,t^{n+1})}\delta t^{\frac{1}{2}}.
\end{align*}
\end{lemma}
\begin{lemma}\label{lemma:Vn_Ve}
Combining assumption \eqref{assumptions_for_exact_solution}, 
Theorem \ref{theorem:discrete_energy_stable}, 
we have
\begin{align*}
	& \left\| {\bf V}^n - {\bf V}(t^n) \right\| \leq C \left\|  e_{Q}^n  \right\|, \\
	& \left\| \nabla {\bf V}^n - \nabla {\bf V}(t^n) \right\| \leq C 
	\left\|  \nabla e_{Q}^n  \right\| + C \left\| e_{Q}^n  \right\|.
\end{align*}
\end{lemma}
\begin{proof}
We take the estimate of $\left\| {\bf V}^n - {\bf V}(t^n) \right\|$ as an example. 
The estimate of $\left\| \nabla {\bf V}^n - \nabla {\bf V}(t^n) \right\|$ is similar, 
	so we omit it.
We can rewrite ${\bf V}^n - {\bf V}(t^n)$ as follows
\begin{align*}
 {\bf V}^n - {\bf V}(t^n)
 	= &g(Q(t^n))\frac{\mathcal{E}_1({\bf Q}(t^n)) - \mathcal{E}_1({\bf Q}^n)}{\sqrt{\mathcal{E}_1({\bf Q}^n) } \sqrt{\mathcal{E}_1({\bf Q}(t^n))} (\sqrt{\mathcal{E}_1({\bf Q}^n) }+\sqrt{\mathcal{E}_1({\bf Q}(t^n)) })}
 	&
 	\\ & 
 		+ \frac{g({\bf Q}^n) - g({\bf Q}(t^n))}{\sqrt{\mathcal{E}_1({\bf Q}^n) }}.
 	& := \sum_{i=1}^2 \mathcal{M}_i
\end{align*}
Note 
\begin{align*}
& 
\left\| \mathcal{M}_1 \right\|
\leq C \left\| g({\bf Q}(t^n)) \right\| \left\| e_{Q}^n  \right\| 
\leq C \left\| e_{Q}^n  \right\|, 
&
\left\| \mathcal{M}_2 \right\| 
\leq C  \left\| \frac{\delta g}{\delta Q}(\xi_2) e_{Q}^n  \right\| 
\leq C  \left\|  e_{Q}^n  \right\|, 
	\quad \xi_2 \in ({\bf Q}^n, {\bf Q}(t^n)).
\end{align*}
The estimate for $\left\| {\bf V}^n - {\bf V}(t^n) \right\|$ can be derived.

\end{proof}

\textbf{Step 2.}
 We derive the error estimate formulas for error functions.
 
Taking the $L^2$ product of Eq.\,\eqref{eq:error_first_order_scheme_Q} and 
	Eq.\,\eqref{eq:error_first_order_scheme_G} 
with $e_{G}^{n+1}$ and $\frac{e_{Q}^{n+1} - e_{Q}^{n}}{\delta t} $ respectively, we have 
\begin{align*}
&\left( \frac{e_{Q}^{n+1} - e_{ Q}^{n}}{\delta t}, e_G^{n+1} \right)
	+ \underset{I_1}{\uwave{\left( \tilde{{\bf u}}^{n+1} \cdot \nabla {\bf Q}^{n} - {\bf u}(t^{n+1}) \cdot \nabla {\bf Q}(t^{n}), e_G^{n+1} \right)}}
\notag \\ & \qquad \qquad \qquad
	+ \underset{I_2}{\uwave{\left( - S(\nabla \tilde{{\bf u}}^{n+1}, {\bf Q}^{n}) + S(\nabla{\bf u}(t^{n+1}), {\bf Q}(t^{n})), e_G^{n+1} \right)}}
	= M \left\| e_{G}^{n+1} \right\|^2 + \underset{-I_3}{\uwave{(R_Q^{n+1}, e_G^{n+1})}},
\\ &
\left( e_G^{n+1} , \frac{e_{Q}^{n+1} - e_{ Q}^{n}}{\delta t} \right)
	= 
	K\left(\Delta e_Q^{n+1} ,\frac{e_{Q}^{n+1} - e_{ Q}^{n}}{\delta t} \right)
	- S_Q \left(e_Q^{n+1} ,\frac{e_{Q}^{n+1} - e_{ Q}^{n}}{\delta t} \right)
\\ &\qquad \qquad \qquad \qquad \qquad
	 + \underset{I_4}{\uwave{\left(-  r^{n+1}{\bf V}^n + r(t^{n+1}){\bf V}(t^{n}), \frac{e_{Q}^{n+1} - e_{ Q}^{n}}{\delta t}\right)}}
	 + \underset{I_5}{\uwave{\left(R_G^{n+1}, \frac{e_{Q}^{n+1} - e_{ Q}^{n}}{\delta t}\right)}}.
\end{align*}
Taking the $L^2$ product of Eq.\,\eqref{eq:error_first_order_scheme_uhat} 
with $\tilde{e}_{u}^{n+1}$, we have 
\begin{align*}
&
\left(\frac{\tilde{e}_u^{n+1} - {e}_u^{n}}{\delta t}, \tilde{e}_{u}^{n+1}\right)
	+ \underset{J_1}{\uwave{\left(({\bf u}^n \cdot \nabla )\tilde{\bf u}^{n+1} - ({\bf u}(t^{n}) \cdot \nabla ){\bf u}(t^{n+1}), 
		\tilde{e}_{u}^{n+1}\right)}}
\notag \\ & \qquad \qquad \qquad
	= 
	- \eta \left\| \nabla \tilde{e}_u^{n+1}\right\|^2 
	+ \underset{J_2}{\uwave{\left( - \nabla p^n + \nabla p(t^{n+1}), \tilde{e}_{u}^{n+1}\right)}}
		+ 
		\underset{J_3}{\uwave{\left( \nabla \cdot \sigma({\bf Q}^n, {\bf G}^{n+1}) - \nabla \cdot \sigma({\bf Q}(t^{n}), {\bf G}(t^{n+1})), \tilde{e}_{u}^{n+1}\right)}}
\notag \\ & \qquad \qquad \qquad
	+\underset{J_4}{\uwave{\left(- \nabla {\bf Q}^n : {\bf G}^{n+1} + \nabla {\bf Q}(t^{n}) : {\bf G}(t^{n+1})
		, \tilde{e}_{u}^{n+1}\right)}} 
	+ \underset{J_5}{\uwave{\left( R_u^{n+1}, \tilde{e}_{u}^{n+1}\right)}},
\end{align*}
Taking the $L^2$ product of Eq.\,\eqref{eq:error_first_order_scheme_r} 
with $2{e}_{r}^{n+1}$, we have 
\begin{align*}
\frac{1}{\delta t}\left(|{e}_{r}^{n+1}|^2 - |{e}_{r}^{n}|^2 + |{e}_{r}^{n+1} - e_r^{n}|^2 \right)
	=
		e_r^{n+1} \left( {\bf V}^n,  \frac{e_Q^{n+1} - e_Q^n}{\delta t}\right)
		+ \underset{K_1}{\uwave{e_r^{n+1} \left( {\bf V}^n - {\bf V}(t^n), \frac{{\bf Q}(t^{n+1}) - {\bf Q}(t^{n})}{\delta t} \right)}}
\\ 
		+ \underset{K_2}{\uwave{2e_r^{n+1} R_r^{n+1}}}.
\end{align*}
Thus, we have
\begin{align}\label{eq:error_formulas}
&\frac{K}{2\delta t}\left( 
	\left\| \nabla e_Q^{n+1} \right\|^2 - \left\| \nabla e_Q^{n} \right\|^2
	+ \left\| \nabla e_Q^{n+1} - \nabla e_Q^{n} \right\|^2
\right)
+ \frac{S_Q}{2\delta t}\left( 
	\left\| e_Q^{n+1} \right\|^2 - \left\| e_Q^{n} \right\|^2
	+ \left\| e_Q^{n+1} - e_Q^{n} \right\|^2
\right)
+ M \left\| e_{G}^{n+1} \right\|^2 
\notag \\ &
+ \left(\frac{\tilde{e}_u^{n+1} - {e}_u^{n}}{\delta t}, \tilde{e}_{u}^{n+1}\right)
	+ \eta \left\| \nabla \tilde{e}_u^{n+1}\right\|^2
+ \frac{1}{\delta t}\left(|{e}_{r}^{n+1}|^2 - |{e}_{r}^{n}|^2 + |{e}_{r}^{n+1} - e_r^{n}|^2 \right)
\notag \\ &
	= e_r^{n+1} \left( {\bf V}^n,  \frac{e_Q^{n+1} - e_Q^n}{\delta t}\right)
	+ \sum_{i=1}^5 I_i 
	+ \sum_{i=1}^5 J_i
	+ \sum_{i=1}^2 K_i.
\end{align}

\textbf{Step 3.}
We estimate each term at the right of Eq.\,\eqref{eq:error_formulas}.
 
Using assumption \ref{assumptions_for_exact_solution},
	H$\ddot{o}$lder's inequality, 
	the Sobolev inequalities, 
	Lemma \ref{Lemma: 1_Gag_Niren}, Theorem \ref{theorem:discrete_energy_stable}
	and the Cauchy inequality,
	we have 
\begin{align*}
I_1 & = \left( \tilde{{\bf u}}^{n+1} \cdot \nabla {\bf Q}^{n} - {\bf u}(t^{n+1}) \cdot \nabla {\bf Q}(t^{n}), e_G^{n+1} \right)
\\ & = 
	\left( \tilde{{e}}_u^{n+1} \cdot \nabla {\bf Q}^{n} , e_G^{n+1} \right)
		+ \left( {\bf u}(t^{n+1}) \cdot \nabla e_{\bf Q}^n, e_G^{n+1} \right)
\\ & \leq
	\left( \tilde{{e}}_u^{n+1} \cdot \nabla {\bf Q}^{n} , e_G^{n+1} \right)
		+ \left\|{\bf u}(t^{n+1}) \right\|_{L^{\infty}}  \left\|\nabla e_Q^{n} \right\| \left\|e_G^{n+1} \right\| 
\\ & \leq 
	\underset{I_{11}}{\uwave{\left( \tilde{{e}}_u^{n+1} \cdot \nabla {\bf Q}^{n} , e_G^{n+1} \right)}}
		+ \frac{M}{10} \left\|e_G^{n+1} \right\|^2 + C \left\|\nabla e_Q^{n} \right\|^2.
\end{align*}
Note
\begin{align*}
&
- S(\nabla \tilde{{\bf u}}^{n+1}, {\bf Q}^{n}) + S(\nabla{\bf u}(t^{n+1}), {\bf Q}(t^{n}))
\\
	= & 
	- S(\nabla \tilde{{\bf u}}^{n+1}, {\bf Q}^{n}) 
		+ S(\nabla{\bf u}(t^{n+1}), {\bf Q}^{n})
		- S(\nabla{\bf u}(t^{n+1}), {\bf Q}^{n})
	+ S(\nabla{\bf u}(t^{n+1}), {\bf Q}(t^{n}))
\\ 
	= & 
		- S(\nabla \tilde{e}_u^{n+1}, {\bf Q}^{n}) 
		-S(\nabla{\bf u}(t^{n+1}), e_{\bf Q}^{n})
		- 2a \left(  e_{\bf Q}^{n}:D(t^{n+1}) \right)e_{\bf Q}^{n}
\\ &
		+ 2a \left( {\bf Q}^{n}:D(t^{n+1}) \right){\bf Q}^{n}
		- 2a \left( {\bf Q}(t^{n}):D(t^{n+1}) \right){\bf Q}(t^{n}),
\end{align*}
we can transform and estimate $I_2$ as follows
\begin{align*}
I_2 =  &- \left( S(\nabla \tilde{e}_u^{n+1}, {\bf Q}^{n}) , e_G^{n+1} \right)
		 -\left( S(\nabla{\bf u}(t^{n+1}), e_{\bf Q}^{n}), e_G^{n+1} \right)
		- 2a \left( 
			\left( e_{\bf Q}^{n}:D(t^{n+1}) \right)e_{\bf Q}^{n}, e_G^{n+1} \right)
	\\ &
		+ 2a \left( \left( {\bf Q}^{n}:D(t^{n+1}) \right){\bf Q}^{n} , e_G^{n+1} \right)
		- 2a \left( \left( {\bf Q}(t^{n}):D(t^{n+1}) \right){\bf Q}(t^{n}) , e_G^{n+1} \right)
	\\ \leq & 
	- \left( S(\nabla \tilde{e}_u^{n+1}, {\bf Q}^{n}) , e_G^{n+1} \right)
	+ C \left\| \nabla {\bf u}(t^{n+1}) \right\|_{L^4} \left\| e_Q^n \right\|_{L^4} \left\|e_G^{n+1} \right\| 
	\\ & 
	+ 2a\left(  
		e_{\bf Q}^{n}:{\bf D}(t^{n+1}){\bf Q}^n
		+ {\bf Q}(t^{n}):{\bf D}(t^{n+1})e_{\bf Q}^{n}
		, e_G^{n+1} \right)
\\ \leq & 
	-\left( S(\nabla \tilde{e}_u^{n+1}, {\bf Q}^{n}) , e_G^{n+1} \right)
	+ \left\| \nabla {\bf u}(t^{n+1}) \right\|_{L^4} \left\| e_Q^n \right\|_{L^4} \left\|e_G^{n+1} \right\| 
	\\ & 
	+ C \left\| e_{\bf Q}^{n}\right\|_{L^6}\left\|{\bf D}(t^{n+1})\right\|_{L^{6}}\left\| {\bf Q}^n \right\|_{L^6}
		\left\| e_G^{n+1}  \right\|
		+ \left\|{\bf Q}(t^{n})\right\|_{L^{6}}
			\left\|{\bf D}(t^{n+1})\right\|_{L^{6}}
			\left\| e_{\bf Q}^{n} \right\|_{L^6}
		\left\| e_G^{n+1}  \right\| 
	\\ \leq &
	-\underset{I_{21}}{\uwave{ \left( S(\nabla \tilde{e}_u^{n+1}, {\bf Q}^{n}) , e_G^{n+1} \right)}}
	+ \frac{M}{10} \left\|e_G^{n+1} \right\|^2 + C \left\| \nabla e_Q^n \right\|^2
	+ C \left\| e_Q^n \right\|^2.
\end{align*}
Meanwhile, $I_4$ can be estimated as follows
\begin{align*}
&\left(-  r^{n+1}{\bf V}^n + r(t^{n+1}){\bf V}(t^{n}), \frac{e_{Q}^{n+1} - e_{ Q}^{n}}{\delta t}\right)
\\
	= & 
	- e_r^{n+1}\left({\bf V}^n , \frac{e_{Q}^{n+1} - e_{ Q}^{n}}{\delta t}\right)
\\ & 
	+ r(t^{n+1})\left(- {\bf V}^n + {\bf V}(t^{n}), \right.
	\\ & \qquad \qquad \left.
			- \tilde{{\bf u}}^{n+1} \cdot \nabla {\bf Q}^{n} + {\bf u}(t^{n+1}) \cdot \nabla {\bf Q}(t^{n})
			+ S(\nabla \tilde{{\bf u}}^{n+1}, {\bf Q}^{n}) - S(\nabla{\bf u}(t^{n+1}), {\bf Q}(t^{n})) 
			+ M e_{G}^{n+1} + R_Q^{n+1}\right)
\\ \leq &
	- e_r^{n+1}\left({\bf V}^n , \frac{e_{Q}^{n+1} - e_{ Q}^{n}}{\delta t}\right)
	+ C \left\|  {\bf V}^n - {\bf V}(t^{n})  \right\|^2 + \frac{M}{10} \left\| e_G^{n+1}\right\|^2
	+ C \left\| R_Q^{n+1}\right\|^2
\\ &
	+ r(t^{n+1}) \left( -\tilde{\bf u}^{n+1} \cdot \nabla e_{\bf Q}^n 
		+ \tilde{e}_u^{n+1} \cdot \nabla {\bf Q}(t^n) ,
		-{\bf V}^n + {\bf V}(t^n)
		\right)
\\ &
	+ r(t^{n+1}) \left( S(\nabla \tilde{{\bf u}}^{n+1}, {\bf Q}^{n}) - S(\nabla{\bf u}(t^{n+1}), {\bf Q}(t^{n})) ,
		-{\bf V}^n + {\bf V}(t^n)
		\right)
\\ \leq &
	- e_r^{n+1}\left({\bf V}^n , \frac{e_{Q}^{n+1} - e_{ Q}^{n}}{\delta t}\right)
	+ \frac{\eta}{12}\left\| \nabla \tilde{e}_u^{n+1}\right\|^2
	+ \frac{M}{10} \left\| e_G^{n+1}\right\|^2
	+ C(1+\left\| \nabla \tilde{\bf u}^{n+1} \right\|^2)\left\| \nabla e_Q^n \right\|^2
	+ C\left\| e_Q^n \right\|^2
	+ C \left\| R_Q^{n+1}\right\|^2,
\end{align*}
due to
\begin{align*}
&
r(t^{n+1}) \left( -\tilde{\bf u}^{n+1} \cdot \nabla e_{\bf Q}^n 
		+ \tilde{e}_u^{n+1} \cdot \nabla {\bf Q}(t^n) ,
		-{\bf V}^n + {\bf V}(t^n)
		\right)
\\ \leq & C
		\left\|\tilde{\bf u}^{n+1} \right\|_{L^4}
		\left\|  \nabla e_{\bf Q}^n \right\|
		\left\| -{\bf V}^n + {\bf V}(t^n) \right\|_{L^4}
	+ C
	\left\| -{\bf V}^n + {\bf V}(t^n) \right\|
		\left\| \tilde{e}_u^{n+1} \right\|_{L^4}
		\left\|  \nabla {\bf Q}(t^n) \right\|_{L^4}
\\ \leq & 
	\frac{\eta}{24} \left\|  \nabla \tilde{e}_u^{n+1} \right\|^2
	+ C \left\| \tilde{\bf u}^{n+1} \right\|^2_{H^1} \left\| \nabla e_{\bf Q}^n \right\|^2
	+ C\left\| {\bf V}^n - {\bf V}(t^n) \right\|^2_{H^1}
\\ \leq &
	\frac{\eta}{24} \left\|  \nabla \tilde{e}_u^{n+1} \right\|^2
	+ C\left\|  e_{\bf Q}^n \right\|^2
	+ C (1 + \left\| \nabla \tilde{\bf u}^{n+1} \right\|^2)  \left\|  \nabla e_{\bf Q}^n \right\|^2,
\end{align*}
and
\begin{align*}
&r(t^{n+1}) \left( S(\nabla \tilde{{\bf u}}^{n+1}, {\bf Q}^{n}) - S(\nabla{\bf u}(t^{n+1}), {\bf Q}(t^{n})) ,
		-{\bf V}^n + {\bf V}(t^n)
		\right)
\\ = & r(t^{n+1}) \left( 
		S(\nabla \tilde{e}_u^{n+1}, {\bf Q}^{n}),
		-{\bf V}^n + {\bf V}(t^n)
		\right)
\\ &
	+ r(t^{n+1}) \left( 
		S(\nabla{\bf u}(t^{n+1}), {\bf Q}^n) 
		- S(\nabla{\bf u}(t^{n+1}), {\bf Q}(t^{n})) ,
		-{\bf V}^n + {\bf V}(t^n)
		\right)
\\ \leq &
	C \left\| \nabla \tilde{e}_u^{n+1}\right\|
		\left( \left\| {\bf Q}^n \right\|_{L^6}^2 
			\left\| {\bf V}^n - {\bf V}(t^n) \right\|_{L^6}
			+
			\left\| {\bf Q}^n \right\|_{L^4} 
			\left\| {\bf V}^n - {\bf V}(t^n) \right\|_{L^4}
		\right)
\\ &
	+
	C \left\| \nabla {\bf u}(t^{n+1})\right\|_{L^6}
		\left\| e_{Q}^n \right\|
		\left\| {\bf V}^n - {\bf V}(t^n) \right\|_{L^6}
		(
		\left\| {\bf Q}^n \right\|_{L^6}
		+
		 \left\| {\bf Q}(t^n) \right\|_{L^6}
		)
\\ &
	+
	C \left\| \nabla {\bf u}(t^{n+1})\right\|_{L^4}
		\left\| e_{Q}^n \right\|
		\left\| {\bf V}^n - {\bf V}(t^n) \right\|_{L^4}
\\ \leq &
	\frac{\eta}{24}\left\| \nabla \tilde{e}_u^{n+1}\right\|^2
	+ C\left\| {\bf V}^n - {\bf V}(t^n) \right\|^2_{H^1}
	+ C\left\| e_Q^n \right\|^2
\\ \leq &
	\frac{\eta}{24}\left\| \nabla \tilde{e}_u^{n+1}\right\|^2
	+ C\left\| e_Q^n \right\|^2 + C\left\| \nabla e_Q^n \right\|^2.
\end{align*}
Similar to $I_4$, we have 
\begin{align*}
I_5 = &\left(R_G^{n+1}, \frac{e_{Q}^{n+1} - e_{ Q}^{n}}{\delta t}\right)
\\ = &
	\left(
		R_G^{n+1}, 
		- \tilde{{\bf u}}^{n+1} \cdot \nabla {\bf Q}^{n} + {\bf u}(t^{n+1}) \cdot \nabla {\bf Q}(t^{n})
			+ S(\nabla \tilde{{\bf u}}^{n+1}, {\bf Q}^{n}) - S(\nabla{\bf u}(t^{n+1}), {\bf Q}(t^{n})) 
			+ M e_{G}^{n+1} + R_Q^{n+1}
	\right)
\\ \leq &
	\frac{\eta}{12}\left\| \nabla \tilde{e}_u^{n+1}\right\|^2
	+ \frac{M}{10} \left\| e_G^{n+1}\right\|^2
	+ C(1+\left\| \nabla \tilde{\bf u}^{n+1} \right\|^2)\left\| \nabla e_Q^n \right\|^2
	+ C\left\| e_Q^n \right\|^2
	+ C \left\| R_Q^{n+1}\right\|^2 
	+ C \left\| R_G^{n+1}\right\|_{H^1}^2.
\end{align*}
Applying integration by parts and $\nabla \cdot {\bf u}^n = 0$, 
we have  
\begin{align*}
J_1 = &  \left(({\bf u}^n \cdot \nabla )\tilde{\bf u}^{n+1} - ({\bf u}(t^{n}) \cdot \nabla ){\bf u}(t^{n+1}), 
		\tilde{e}_{u}^{n+1}\right)
	\\ = & 
		\underset{J_{11}}{\uwave{
		\left(({\bf u}^n \cdot \nabla )\tilde{e}_u^{n+1} , 
		\tilde{e}_{u}^{n+1}\right)}}
		+ 
		\left((e_u^n \cdot \nabla ){\bf u}(t^{n+1}) , 
		\tilde{e}_{u}^{n+1}\right)
\\ \leq &
	C \left\| \tilde{e}_{u}^{n+1} \right\|_{L^4}
	\left\| \nabla {\bf u}(t^{n+1}) \right\|_{L^4}
		\left\| {e}_{u}^{n} \right\|
	\\ \leq &
		\frac{\eta}{12} \left\| \nabla \tilde{e}_{u}^{n+1} \right\|^2
		+ C \left\| {e}_{u}^{n} \right\|^2.
\end{align*}
Taking the $L^2$ inner product of Eq.\,\eqref{eq:error_first_order_scheme_u_p} 
	with $\frac{e_{ u}^{n+1} + \tilde{e}_u^{n+1} }{2}$, we have
\begin{align*}
\frac{\left\| e_{ u}^{n+1}\right\|^2 - \left\| \tilde{e}_u^{n+1}\right\|^2}{2\delta t}
	+\frac{1}{2}\left(\nabla(p^{n+1} - p^n), \tilde{e}_u^{n+1} \right) = 0.
\end{align*}
Therefore,
\begin{align*}
&J_2 - \frac{1}{2}\left(\nabla(p^{n+1} - p^n), \tilde{e}_u^{n+1} \right)
\\ = &  - \frac{1}{2}\left( \nabla (p^{n+1} +  p^n - 2 p(t^{n+1})), \tilde{e}_{u}^{n+1}\right)
\\ = &
	- \frac{1}{2}\left( \nabla (e_p^{n+1} +  e_p^n - p(t^{n+1}) + p(t^{n})), \tilde{e}_{u}^{n+1}\right)
\\ = &
	- \frac{1}{2}\left( \nabla (e_p^{n+1} +  e_p^n - p(t^{n+1}) + p(t^{n})), 
		{e}_{u}^{n+1} + \delta t (\nabla (e_p^{n+1} -  e_p^n) + \nabla (p(t^{n+1}) - p(t^{n})))
		\right)
\\ = &
	-\frac{\delta t}{2} (\left\| \nabla e_p^{n+1} \right\|^2 - \left\| \nabla e_p^{n} \right\|^2)
	- \delta t(\nabla (p(t^{n+1}) - p(t^{n}))), \nabla e_p^n)
	+ \frac{\delta t}{2}\left\| \nabla (p(t^{n+1}) - p(t^{n}))) \right\|^2 
\\ \leq &
	-\frac{\delta t}{2} (\left\| \nabla e_p^{n+1} \right\|^2 - \left\| \nabla e_p^{n} \right\|^2)
	+ \delta t^2\left\| \nabla e_p^{n} \right\|^2 
	+ C\left\| p\right\|^2_{W^{1,\infty}(H^1)} \delta t^2.
\end{align*}
For $J_3$, we can derive  
\begin{align*}
J_3 = &  
	\left( \nabla \cdot \sigma({\bf Q}^n, {\bf G}^{n+1}) - \nabla \cdot \sigma({\bf Q}(t^{n}), {\bf G}(t^{n+1})), \tilde{e}_{u}^{n+1}\right)
	\\ \leq &
	\left( \nabla \cdot \sigma({\bf Q}^n, e_{ G}^{n+1}) , \tilde{e}_{u}^{n+1}\right)
	- 
	\left( \sigma(e_{\bf Q}^n, {\bf G}(t^{n+1})) 
				- 2a(e_{ Q}^n:{\bf G}(t^{n+1}))e_{Q}^n, 
		\nabla \tilde{e}_{u}^{n+1}\right)
	\\ & - 2a 
	\left( 
		({\bf Q}^n:{\bf G}(t^{n+1})){\bf Q}^n - ({\bf Q}(t^{n}):{\bf G}(t^{n+1})){\bf Q}(t^{n}), 
		\nabla \tilde{e}_{u}^{n+1}\right)
\\ \leq & 
	\left( \nabla \cdot \sigma({\bf Q}^n, e_{ G}^{n+1}) , \tilde{e}_{u}^{n+1}\right)
	+ 
	C \left\| \nabla \tilde{e}_{u}^{n+1}\right\|
			\left\| G(t^{n+1}) \right\|_{L^6}
		\left\| e_{Q}^n \right\|_{L^6}
		(
		\left\| {\bf Q}^n \right\|_{L^6}
		+
		 \left\| {\bf Q}(t^n) \right\|_{L^6}
		)
	\\  &
	+ 
	C \left\| \nabla \tilde{e}_{u}^{n+1}\right\|
			\left\| G(t^{n+1}) \right\|_{L^4}
		\left\| e_{Q}^n \right\|_{L^4}
	\\ \leq &
	\underset{J_{31}}{\uwave{
	\left( \nabla \cdot \sigma({\bf Q}^n, e_{ G}^{n+1}) , \tilde{e}_{u}^{n+1}\right)
	}}
	+ \frac{\eta}{12} \left\| \nabla \tilde{e}_{u}^{n+1}\right\|^2
	+ C \left\| \nabla e_{Q}^n \right\|^2
	+ C \left\| e_{Q}^n \right\|^2.
\end{align*}
For $J_4$, we can obtain   
\begin{align*}
J_4 = &  
	\left(- \nabla {\bf Q}^n : {\bf G}^{n+1} + \nabla {\bf Q}(t^{n}) : {\bf G}(t^{n+1})
		, \tilde{e}_{u}^{n+1}\right)
	\\ = &
	-
	\left( \nabla {\bf Q}^n : e_{G}^{n+1} , \tilde{e}_{u}^{n+1}\right)
	-\left( \nabla e_{Q}^n : {\bf G}(t^{n+1}), \tilde{e}_{u}^{n+1}\right)
	\\ \leq &
	-
	\left( \nabla {\bf Q}^n : e_{G}^{n+1} , \tilde{e}_{u}^{n+1}\right)
	+\left\| \nabla e_{Q}^n \right\| \left\| {\bf G}(t^{n+1})\right\|_{L^4} 
	\left\| \tilde{e}_{u}^{n+1}\right\|_{L^4}
	\\ \leq &
	-\underset{J_{41}}{\uwave{
	\left( \nabla {\bf Q}^n : e_{G}^{n+1} , \tilde{e}_{u}^{n+1}\right)
	}}
	+ \frac{\eta}{12} \left\| \nabla \tilde{e}_{u}^{n+1}\right\|^2
	+ C \left\|\nabla e_{Q}^n \right\|^2.
\end{align*}

Finally, we also have
\begin{align*}
I_3 =& (R_Q^{n+1}, e_G^{n+1}) 
	\leq \frac{M}{10} \left\|e_G^{n+1} \right\|^2 + C \left\| R_Q^{n+1} \right\|^2, 
\\
J_5 = &  
	\left( R_u^{n+1}, \tilde{e}_{u}^{n+1}\right)
	\leq 
	\frac{\eta}{12} \left\| \nabla \tilde{e}_{u}^{n+1}\right\|^2
	+ C \left\| R_{u1}^{n+1} \right\|^2 + C \left\| R_{u2}^{n+1} \right\|^2,
\\ 
K_1 \leq & 
		|e_r^{n+1}| \left\| {\bf V}^n - {\bf V}(t^n) \right\| 
			\left\| {\bf Q} \right\|_{W^{1,\infty}(0,T, L^2)}
	\leq C |e_r^{n+1}|^2 + C \left\| {\bf V}^n - {\bf V}(t^n) \right\|^2
\\ 
	\leq &  C |e_r^{n+1}|^2 + C \left\| e_Q^{n} \right\|^2, 
\\ 
K_2 \leq & 2|e_r^{n+1}||R_r^{n+1}|
	\leq C |e_r^{n+1}|^2 + C |R_r^{n+1}|^2.
\end{align*}

\textbf{Step 4.}
Combing the above estimates, we can derive the theorem\,\ref{theorem:error_estimates}, 
which is the main result of error estimate.

In conclusion, we have
\begin{align*}
&\frac{K}{2\delta t}\left( 
	\left\| \nabla e_Q^{n+1} \right\|^2 - \left\| \nabla e_Q^{n} \right\|^2
	+ \left\| \nabla e_Q^{n+1} - \nabla e_Q^{n} \right\|^2
	\right)
	+ \frac{S_Q}{2\delta t}\left( 
	\left\| e_Q^{n+1} \right\|^2 - \left\| e_Q^{n} \right\|^2
	+ \left\| e_Q^{n+1} - e_Q^{n} \right\|^2
\right)
	+ M \left\| e_{G}^{n+1} \right\|^2 
&
\\ &
	+ \frac{1}{2\delta t}\left( 
		\left\| e_u^{n+1}\right\|^2 - \left\| e_u^{n}\right\|^2 
		+\left\| \tilde{e}_u^{n+1}-e_u^{n}\right\|^2 
	\right)
	+ \eta \left\| \nabla \tilde{e}_u^{n+1}\right\|^2
	+ \frac{1}{\delta t}\left( 
		|{e}_{r}^{n+1}|^2 - |{e}_{r}^{n}|^2 + |{e}_{r}^{n+1} - e_r^{n}|^2
	\right)
\\ \leq & 
\underset{I_{11}}{\uwave{\left( \tilde{{e}}_u^{n+1} \cdot \nabla {\bf Q}^{n} , e_G^{n+1} \right)}}
	-\underset{I_{21}}{\uwave{\left( S(\nabla \tilde{e}_u^{n+1}, {\bf Q}^{n}) , e_G^{n+1} \right)}}
		+\underset{J_{31}}{\uwave{
	\left( \nabla \cdot \sigma({\bf Q}^n, e_{ G}^{n+1}) , \tilde{e}_{u}^{n+1}\right)
	}}
		-\underset{J_{41}}{\uwave{
	\left( \nabla {\bf Q}^n : e_{G}^{n+1} , \tilde{e}_{u}^{n+1}\right)
	}}
\\ & 
	+(R_Q^{n+1}, e_G^{n+1}) + \left( R_u^{n+1}, \tilde{e}_{u}^{n+1}\right)
	+ 2e_r^{n+1} R_r^{n+1}
\\ &
	-\frac{\delta t}{2} (\left\| \nabla e_p^{n+1} \right\|^2 - \left\| \nabla e_p^{n} \right\|^2)
	+
	\frac{2M}{5} \left\| e_{G}^{n+1} \right\|^2 
	+ \frac{5\eta}{12} \left\| \nabla \tilde{e}_{u}^{n+1}\right\|^2
	+ C(1+\left\| \nabla \tilde{\bf u}^{n+1} \right\|^2)\left\| \nabla e_Q^n \right\|^2
\\ &
+ C\left( 
		\left\| e_Q^n \right\|^2
		+ \left\| {e}_{u}^{n} \right\|^2
	+ \delta t^2\left\| \nabla e_p^{n} \right\|^2 
			\right)
	 + C \left( 
		\left\| R_G^{n+1}\right\|_{H^1}^2
	 	+ C \left\| p\right\|^2_{W^{1,\infty}(H^1)} \delta t^2
	 	\right)
\\ \leq & 
	-\frac{\delta t}{2} (\left\| \nabla e_p^{n+1} \right\|^2 - \left\| \nabla e_p^{n} \right\|^2)
	+
	\frac{M}{2} \left\| e_{G}^{n+1} \right\|^2 
	+ \frac{\eta}{2} \left\| \nabla \tilde{e}_{u}^{n+1}\right\|^2
	+ C(1+\left\| \nabla \tilde{\bf u}^{n+1} \right\|^2)\left\| \nabla e_Q^n \right\|^2
\\ &
+ C\left( 
		\left\| e_Q^n \right\|^2
		+ \left\| {e}_{u}^{n} \right\|^2
		+ |e_r^{n+1}|^2
	+ \delta t^2\left\| \nabla e_p^{n} \right\|^2 
			\right)
\\ &
	 + C \left( 
	 	\left\| R_Q^{n+1}\right\|^2
		+ \left\| R_G^{n+1}\right\|_{H^1}^2
		+ \left\| R_{u1}^{n+1} \right\|^2
		+ \left\| R_{u2}^{n+1} \right\|^2
	 	+ |R_r^{n+1}|^2
	 	+ C \left\| p\right\|^2_{W^{1,\infty}(H^1)} \delta t^2
	 	\right)
&
\\ \leq &
	\frac{M}{2} \left\| e_{G}^{n+1} \right\|^2 
	+ \frac{\eta}{2} \left\| \nabla \tilde{e}_{u}^{n+1}\right\|^2
	-\frac{\delta t}{2} (\left\| \nabla e_p^{n+1} \right\|^2 - \left\| \nabla e_p^{n} \right\|^2)
&
\\ &
	+ C\left(
		(1+\left\| \nabla \tilde{\bf u}^{n+1} \right\|^2)\left\| \nabla e_Q^n \right\|^2 + \left\| e_Q^n \right\|^2
		+ \left\| {e}_{u}^{n} \right\|^2 
		+ |e_r^{n+1}|^2		
		+ \delta t^2\left\| \nabla e_p^{n} \right\|^2
	\right)
	+ C |r_{tt}|_{L^2(t^n, t^{n+1})}^2 \delta t
	+ C\delta t^2,
\end{align*}
where we have used $I_{11} = J_{41}$, $I_{21} = J_{31}$ and 
Lemma\,\ref{lemma:R_Q_G_u}.
Therefore,
\begin{align*}
& 
	\frac{K}{2}\left\| \nabla e_Q^{n+1} \right\|^2 - \frac{K}{2}\left\| \nabla e_Q^{n} \right\|^2
	+\frac{S_Q}{2}\left\|e_Q^{n+1} \right\|^2 - \frac{S_Q}{2}\left\| e_Q^{n} \right\|^2
		+\frac{1}{2}\left\| e_u^{n+1}\right\|^2 - \frac{1}{2}\left\| e_u^{n}\right\|^2 
		+|{e}_{r}^{n+1}|^2 - |{e}_{r}^{n}|^2 
&
\\ &
	+ \frac{\delta t^2}{2}(\left\| \nabla e_p^{n+1} \right\|^2 - \left\| \nabla e_p^{n} \right\|^2)
	+ \frac{\eta}{2}\delta t \left\| \nabla \tilde{e}_u^{n+1}\right\|^2
	+ \frac{M}{2}\delta t  \left\| e_{G}^{n+1} \right\|^2 
\\ \leq &
	C \delta t  \left(
		(1+\left\| \nabla \tilde{\bf u}^{n+1} \right\|^2)\left\| \nabla e_Q^n \right\|^2 + \left\| e_Q^n \right\|^2
		+ \left\| {e}_{u}^{n} \right\|^2 
		+ |e_r^{n+1}|^2		
		+ \delta t^2\left\| \nabla e_p^{n} \right\|^2
	\right)
	+ C |r_{tt}|_{L^2(t^n, t^{n+1})}^2 \delta t^2
	+ C\delta t^3.
\end{align*}
Applying the discrete Gronwall inequality\,\cite{He_Y_2007_Stability, Shen_J_1990_Long}, 
there exists a positive constant $\delta t_1$ such that
\begin{align*}
&\frac{K}{2}\left\| \nabla e_Q^{k+1} \right\|^2
			+ \frac{S_Q}{2}\left\|e_Q^{k+1} \right\|^2
			+ \frac{1}{2}\left\| e_u^{k+1}\right\|^2
			+|{e}_{r}^{k+1}|^2  
			+ \frac{\delta t^2}{2}\left\| \nabla e_p^{k+1} \right\|^2
		\\ & \qquad \qquad \qquad \qquad \qquad \qquad \qquad \qquad 
		+ \frac{\eta}{2}\delta t\sum_{n=0}^{k+1} \left\| \nabla \tilde{e}_u^{n+1}\right\|^2
	+ \frac{M}{2} \delta t \sum_{n=0}^{k+1}\left\| e_{G}^{n+1} \right\|^2 
\leq C \delta t^2.
\end{align*}
where $\delta t < \delta t_1$.

\textbf{Step 5.} Prove $\left\| {\bf Q}^{k+1} \right\|_{L^{\infty}} \leq C$.

To give the proof of $\left\| {\bf Q}^{k+1} \right\|_{L^{\infty}} \leq C$, 
we need $H^2$ boundedness of $e_Q^{k+1}$. 
With the $H^2$ regularity results for elliptic equation, 
we can obtain
\begin{align*}
\left\| e_Q^{k+1} \right\|_{H^2} 
	\leq &
	 C \left( \left\| e_Q^{k+1} \right\| + \left\| \Delta e_Q^{k+1} \right\| \right)
	 \\ \leq &
	 C \left( \left\| e_Q^{k+1} \right\| 
	 	+ \frac{1}{K}\left\| e_G^{k+1} + S_Q e_Q^{k+1} +r^{k+1} {\bf V}^k - r(t^{k+1}) {\bf V}(t^k) - R_G^{k+1} \right\| \right)
	 \\ \leq &
	 C \left( \left\| e_Q^{k+1} \right\| 
	 	+ \left\| e_G^{k+1} \right\| 
	 	 + \left\| r^{k+1} {\bf V}^k - r(t^{k+1}) {\bf V}(t^k)\right\| 
	 	 + \left\| R_G^{k+1} \right\| \right)
	 \\ \leq & C \delta t^{\frac{1}{2}},
\end{align*}
Then, we can derive
\begin{align*}
\left\| {\bf Q}^{k+1} \right\|_{L^{\infty}} 
	\leq \left\| e_{Q}^{k+1} \right\|_{L^{\infty}} 
			+ \left\| {\bf Q}(t^{k+1}) \right\|_{L^{\infty}}
	\leq  \left\| e_{Q}^{k+1} \right\|_{H^2} 
			+ \left\| {\bf Q}(t^{k+1}) \right\|_{L^{\infty}}
	\leq C_1 \delta t^{\frac{1}{2}} + \left\| {\bf Q}(t^{k+1}) \right\|_{L^{\infty}}
	\leq \mathcal{C}.
\end{align*}
where $\delta t \leq \delta t_2 = 1/C_1^2$.
Thus, we obtain the conclusion for $\delta t < \delta t_0 = min\{ \delta t_1, \delta t_2 \}$.
\end{proof}

\begin{theorem}\label{theorem:error_estimates}
Assume the assumption \eqref{assumptions_for_exact_solution} holds,
we have
\begin{align}
&\frac{K}{2}\left\| \nabla e_Q^{k+1} \right\|^2
			+ \frac{S_Q}{2}\left\|e_Q^{k+1} \right\|^2
			+ \frac{1}{2}\left\| e_u^{k+1}\right\|^2
			+|{e}_{r}^{k+1}|^2  
			+ \frac{\delta t^2}{2}\left\| \nabla e_p^{k+1} \right\|^2
\notag \\ & \qquad \qquad \qquad 
		+ \frac{\eta}{2}\delta t\sum_{n=0}^{k+1} \left\| \nabla \tilde{e}_u^{n+1}\right\|^2
	+ \frac{M}{2} \delta t \sum_{n=0}^{k+1}\left\| e_{G}^{n+1} \right\|^2 
\leq C \delta t^2, 
\qquad 
\forall \ 0 \leq k \leq N-1.
\label{eq:error_estimate}
\end{align}
\end{theorem}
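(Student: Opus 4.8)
The plan is to establish \eqref{eq:error_estimate} by deriving a one-step energy-type inequality for the error quantities and then invoking the discrete Gronwall lemma, carried out jointly with Lemma \ref{lemma:infty} by induction on $n$. The subtle feature is that controlling the nonlinear terms requires the uniform bound $\|{\bf Q}^n\|_{L^\infty}\le\mathcal{C}$, while that bound is in turn a consequence of the $H^2$-error control obtained from the present estimate; hence, assuming the $L^\infty$ bound holds up to level $k$, I would prove the error inequality at level $k+1$, which then furnishes the elliptic $H^2$ estimate needed to advance the induction. First I would subtract the continuous system \eqref{eq:equal_model}, tested at the appropriate time levels, from the scheme \eqref{eq:first_order_scheme_Q}-\eqref{eq:first_order_scheme_u_p} to obtain the error equations \eqref{eq:error_first_order_scheme_Q}-\eqref{eq:error_first_order_scheme_r}, whose consistency residuals are bounded by Lemma \ref{lemma:R_Q_G_u} via Taylor expansion and the regularity \eqref{assumptions_for_exact_solution}; note $R_r^{n+1}$ only enjoys the half-order bound $|R_r^{n+1}|\le C|r_{tt}|_{L^2(t^n,t^{n+1})}\delta t^{1/2}$, which after squaring and summing still contributes $O(\delta t^2)$ since $r_{tt}\in L^2(0,T)$.

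Next I would mimic the energy-stability computation of Theorem \ref{theorem:discrete_energy_stable} at the error level, pairing \eqref{eq:error_first_order_scheme_Q} with $e_G^{n+1}$, \eqref{eq:error_first_order_scheme_G} with $(e_Q^{n+1}-e_Q^n)/\delta t$, \eqref{eq:error_first_order_scheme_uhat} with $\tilde{e}_u^{n+1}$, and \eqref{eq:error_first_order_scheme_r} with $2e_r^{n+1}$, then summing to reach the master identity \eqref{eq:error_formulas}. The telescoping structure yields on the left the discrete increments of $\frac{K}{2}\|\nabla e_Q\|^2+\frac{S_Q}{2}\|e_Q\|^2+\frac12\|e_u\|^2+|e_r|^2$ together with the dissipation $M\|e_G^{n+1}\|^2+\eta\|\nabla\tilde{e}_u^{n+1}\|^2$, while the right collects the nonlinear couplings $I_i,J_i,K_i$. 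The pressure is handled through the projection identity \eqref{eq:error_first_order_scheme_u_p}, which converts the splitting defect $\|e_u^{n+1}\|^2-\|\tilde{e}_u^{n+1}\|^2$ and the pressure increment into the telescoping term $\frac{\delta t^2}{2}(\|\nabla e_p^{n+1}\|^2-\|\nabla e_p^n\|^2)$ plus an $O(\delta t^2)$ remainder.

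The decisive point, and what I expect to be the main obstacle, is the control of the cubic nonlinear terms generated by the Gordon-Schowalter transport $S$, the elastic stress $\sigma$, and the convection $\nabla{\bf Q}:{\bf G}$, none of which can individually be dominated by the dissipation. The resolution, inherited directly from the energy law, is the exact cancellation of the two most dangerous pairs, namely $I_{11}=J_{41}$ and $I_{21}=J_{31}$, which are the transport and stress couplings between the $\bf Q$-equation and the momentum equation. Every remaining term I would split as product $=$ (error)$\times$(exact) $+$ (approximation)$\times$(error), and then estimate by Hölder's inequality and the Sobolev embeddings $H^1\hookrightarrow L^6$ and $H^1\hookrightarrow L^4$, crucially using the inductive bound $\|{\bf Q}^k\|_{L^\infty}\le\mathcal{C}$, the a priori bounds $\|{\bf Q}^n\|_{H^1},\|\tilde{\bf u}^n\|,|r^n|\le C$ from Theorem \ref{theorem:discrete_energy_stable}, and Lemma \ref{lemma:Vn_Ve} for the increments of $\bf V$; Young's inequality then absorbs each $e_G^{n+1}$ and $\nabla\tilde{e}_u^{n+1}$ factor into the left-hand dissipation with small constants.

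Finally, setting $a_n=\frac{K}{2}\|\nabla e_Q^n\|^2+\frac{S_Q}{2}\|e_Q^n\|^2+\frac12\|e_u^n\|^2+|e_r^n|^2+\frac{\delta t^2}{2}\|\nabla e_p^n\|^2$, the collected estimates produce a recurrence $a_{n+1}-a_n+\frac{\eta}{2}\delta t\|\nabla\tilde{e}_u^{n+1}\|^2+\frac{M}{2}\delta t\|e_G^{n+1}\|^2\le C\delta t(1+\|\nabla\tilde{\bf u}^{n+1}\|^2)a_n+C|r_{tt}|^2_{L^2(t^n,t^{n+1})}\delta t^2+C\delta t^3$. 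The Gronwall coefficient is not constant but carries the factor $\|\nabla\tilde{\bf u}^{n+1}\|^2$; the discrete Gronwall inequality nonetheless applies because $\delta t\sum_n\|\nabla\tilde{\bf u}^{n+1}\|^2\le C$ by energy stability, keeping the exponential factor bounded. Summing the residuals with $r_{tt}\in L^2(0,T)$ gives total $O(\delta t^2)$, and since $e_Q^0=e_u^0=e_r^0=0$ the initial data contributes nothing beyond $O(\delta t^2)$; this yields \eqref{eq:error_estimate}. The resulting bound $\|e_Q^{k+1}\|\le C\delta t$ feeds back through the elliptic $H^2$ estimate in Step 5 of Lemma \ref{lemma:infty} to close the $L^\infty$ induction, so that both results are proved simultaneously for $\delta t<\delta t_0$.
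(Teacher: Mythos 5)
Your proposal reproduces, in substance, the paper's own argument: everything you describe---the error equations, the residual bounds of Lemma \ref{lemma:R_Q_G_u} (including the half-order bound on $R_r^{n+1}$ that still sums to $O(\delta t^2)$ since $r_{tt}\in L^2(0,T)$), the energy-type pairing leading to the master identity \eqref{eq:error_formulas}, the exact cancellations $I_{11}=J_{41}$ and $I_{21}=J_{31}$, the Gronwall recurrence with the variable coefficient $1+\|\nabla\tilde{\bf u}^{n+1}\|^2$ made admissible because $\delta t\sum_n\|\nabla\tilde{\bf u}^{n+1}\|^2\le C$ by energy stability, and the elliptic $H^2$ feedback that closes the $L^{\infty}$ induction---is precisely the content of Steps 1--5 in the proof of Lemma \ref{lemma:infty}, which the paper's proof of Theorem \ref{theorem:error_estimates} simply invokes.

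The one point you omit is that your argument establishes \eqref{eq:error_estimate} only for $\delta t<\delta t_0$, whereas the theorem is stated with no restriction on the time step; the paper's proof explicitly treats the complementary case. For $\delta t\ge\delta t_0$ the left-hand side of \eqref{eq:error_estimate} is bounded by a constant $C_2$ independent of $\delta t$ (the discrete quantities are controlled by the unconditional energy stability of Theorem \ref{theorem:discrete_energy_stable}, the exact-solution quantities by the regularity \eqref{assumptions_for_exact_solution}), and then trivially $C_2\le C_2(\delta t_0)^{-2}\delta t^2$. This is a two-line fix rather than a flaw in the analytic core, but as written your proof does not cover the full claim of the theorem.
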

\begin{proof}
By the proof of Lemma \ref{lemma:infty}, 
	Eq.\,\eqref{eq:error_estimate} holds for $\delta t < \delta t_0$. 
If $\delta t \geq \delta t_0$, 
	we have
	\begin{align*}
	&\frac{K}{2}\left\| \nabla e_Q^{k+1} \right\|^2
			+ \frac{S_Q}{2}\left\|e_Q^{k+1} \right\|^2
			+ \frac{1}{2}\left\| e_u^{k+1}\right\|^2
			+|{e}_{r}^{k+1}|^2  
			+ \frac{\delta t^2}{2}\left\| \nabla e_p^{k+1} \right\|^2
\notag \\ & \qquad \qquad \qquad \qquad \qquad
		+ \frac{\eta}{2}\delta t\sum_{n=0}^{k+1} \left\| \nabla \tilde{e}_u^{n+1}\right\|^2
	+ \frac{M}{2} \delta t \sum_{n=0}^{k+1}\left\| e_{G}^{n+1} \right\|^2 
\leq C_2 \leq C_2 (\delta t_0)^{-2} \delta t^2, 
	\end{align*}
where $k = 0,1,2,...,N-1$.
Finally, we can obtain Eq.\,\eqref{eq:error_estimate} holds for any $\delta t$. 
\end{proof}
\textbf{Remark.} 
Note that our proof is stated for the boundary condition \eqref{boundary:dirichlet},
but for the periodic boundary we can get the same conclusion
due to the following estimation 
\begin{align*}
\left\| \tilde{e}_u^{n+1} \right\|^2 
	\leq 
	\left\| e_u^{n+1} \right\|^2 
	+ \delta t^2 \left( \left\| \nabla e_p^{n+1} \right\|^2 + \left\| \nabla e_p^{n} \right\|^2 \right)
	+ C \delta t^4.
\end{align*}

\section{Numerical examples}
\label{Sec:Numerical_examples}
In this section, 
	several numerical examples are given to verify the effectiveness of our scheme \eqref{eq:first_order_scheme_Q}-\eqref{eq:first_order_scheme_r}.
Firstly, 
mesh refinement tests are designed to validate the convergence rate of 
	$L^2$-norm error of our proposed method in time direction. 
Secondly, the dissipation property of the modified energy is validated numerically.
In numerical simulations below, 
we assume that the directors of the LCPs are imposed in the $x-y$ plane, 
so that the tensor ${\bf Q}$ is a $2\times 2$ matrix. 
The unknown functions depend on
variables $x$ and $y$ only and the model parameters are 
\begin{align*}
a = 1, \quad \eta = 1, \quad \alpha = -0.2, \quad \gamma = 1,
\quad K = 0.001, \quad M = 1, \quad C_0 = 10, \quad S_Q = 30.  
\end{align*}
To ignore the spatial discrete error, 
	we employ $512 \times 512$ mesh grids on domain $\Omega = [0, 1]^2$.
We present the $L^2$ errors of the Cauchy sequence at $t = 0.1$ with different time steps $\delta t_k = 8\times {10}^{-5}/2^k$\,($k = 0,1,\cdots,5$). 
The Cauchy error between two different time step sizes $\delta t$ and $\frac{\delta t}{2}$ 
is measured by $\left\| e_f \right\| = \left\| f_{\delta t} - f_{\delta t/2}\right\|$.
\subsection{Time step refinements}
In the first examples, 
	we choose the initial and boundary conditions
	\begin{equation}\label{eq:accuracy_test_initial_1}
	\begin{aligned}
	&
	{\bf u}_0 = 0, \qquad 
		{\bf Q}_0 = {\bf n}_0 {\bf n}_0^T - \frac{\left\| {\bf n}_0 \right\|^2}{2}, \\
	&
	{\bf u}|_{\partial \Omega} = 0, \qquad	
		{\bf Q}|_{\partial \Omega} = {\bf Q}_0|_{\partial \Omega},
	\end{aligned}
	\end{equation}
where ${\bf n}_0 = (\sin(2\pi x) \sin(2\pi y), 0)^T$. 
As can be seen in Table \ref{tb:initial_1}, 
the scheme \eqref{eq:first_order_scheme_Q}-\eqref{eq:first_order_scheme_r} presents first order convergence rates for all variables.

\begin{table}[H]
\caption{Temporal $L^2$-errors and convergence rates of ${\bf Q}$, ${\bf u}$ and $r$ at $t = 0.1$ 
			with initial condition\,\eqref{eq:accuracy_test_initial_1} and $\delta t_k = 8\times {10}^{-5}/2^k$, $k = 0,1,\cdots,4$.}
\label{tb:initial_1}
\begin{center}
\setlength{\tabcolsep}{1.0mm}{
\begin{tabular}{|c|c|c|c|c|c|c|c|c|c|c|}
\hline
$\delta t$
&$Q_{11}$ & Order 
&$Q_{12}$ & Order
& $u$ & Order 
& $v$ & Order 
& $r$ & Order 
\\
\hline
$\delta t_0$
&3.03E-03   & /
&3.42E-04	& /
&1.03E-04	& /
&1.05E-04	& /
&6.82E-06  & /
\\
\hline
$\delta t_1$
&1.52E-03	& 1.00 
&1.71E-04	& 0.99
&5.16E-05	& 1.00 
&5.27E-05	& 1.00 
&3.42E-06  &  1.00 
\\
\hline
$\delta t_2$
&7.60E-04	& 1.00 
&8.59E-05	& 1.00 
&2.58E-05	& 1.00 
&2.64E-05	& 1.00 
&1.71E-06  &  1.00 
\\
\hline
$\delta t_3$
&3.80E-04	& 1.00 
&4.30E-05	& 1.00 
&1.29E-05	& 1.00 
&1.32E-05	& 1.00 
&8.56E-07  &  1.00 
\\
\hline
$\delta t_4$
&1.90E-04	& 1.00 
&2.15E-05	& 1.00 
&6.46E-06	& 1.00 
&6.60E-06	& 1.00 
&4.28E-07  &  1.00 
\\
\hline
\end{tabular}}
\end{center}
\end{table}

In the second example, 
we choose ${\bf n}_0 = (\sin(2\pi x) \sin(2\pi y), \cos(2\pi x) \cos(2\pi y))^T$
	and consider the same computational domain and mesh size. 
As can be seen in Table \ref{tb:initial_2}, 
the convergence order of scheme \eqref{eq:first_order_scheme_Q}-\eqref{eq:first_order_scheme_r} 
	is also consistent with theoretical analysis.
\begin{table}[H]
\caption{Temporal $L^2$-errors and convergence rates of ${\bf Q}$, ${\bf u}$ and $r$ at $t = 0.1$ 
			with ${\bf n}_0 = (\sin(2\pi x) \sin(2\pi y), \cos(2\pi x) \cos(2\pi y))^T$ and $\delta t_k = 8\times {10}^{-5}/2^k$, $k = 0,1,\cdots,4$.}
\label{tb:initial_2}
\begin{center}
\setlength{\tabcolsep}{1.0mm}{
\begin{tabular}{|c|c|c|c|c|c|c|c|c|c|c|}
\hline
$\delta t$
&$Q_{11}$ & Order 
&$Q_{12}$ & Order
& $u$ & Order 
& $v$ & Order 
& $r$ & Order 
\\
\hline
$\delta t_0$
&4.89E-03   & /
&2.28E-03	& /
&6.49E-05	& /
&6.49E-05	& /
&2.93E-05   & /
\\
\hline
$\delta t_1$
&2.45E-03   & 1.00
&1.14E-03	& 1.00
&3.25E-05	& 1.00
&3.25E-05	& 1.00
&1.47E-05   & 1.00
\\
\hline
$\delta t_2$
&1.23E-03   & 1.00
&5.72E-04	& 1.00
&1.62E-05	& 1.00
&1.62E-05	& 1.00
&7.36E-06   & 1.00
\\
\hline
$\delta t_3$
&6.14E-04   & 1.00
&2.86E-04	& 1.00
&8.12E-06	& 1.00
&8.12E-06	& 1.00
&3.68E-06   & 1.00
\\
\hline
$\delta t_4$
&3.07E-04   & 1.00
&1.43E-04	& 1.00
&4.06E-06	& 1.00
&4.06E-06	& 1.00
&1.84E-06   & 1.00
\\
\hline
\end{tabular}}
\end{center}
\end{table}

\subsection{Energy dissipation}
In this experiment, 
the development of defects are simulated on a period domain.
For the domain $[0, L_x]\times [0, L_y]$, 
	$L_x = L_y = 1$, 
	we consider the initial condition
	\begin{align*}
	{\bf Q}_0 = \frac{{\bf n}_0 {\bf n}_0^T}{\left\| {\bf n}_0 \right\|^2} - \frac{\bf I}{2}, 
	\quad
	{\bf n}_0 = ( x-0.25L_x, y-0.25L_y )^T.
	\end{align*}
\begin{figure}[!htbp]                                                           
    \centering      
   \begin{minipage}{0.3\linewidth}                                                        
   \centerline{\includegraphics[scale=0.26]{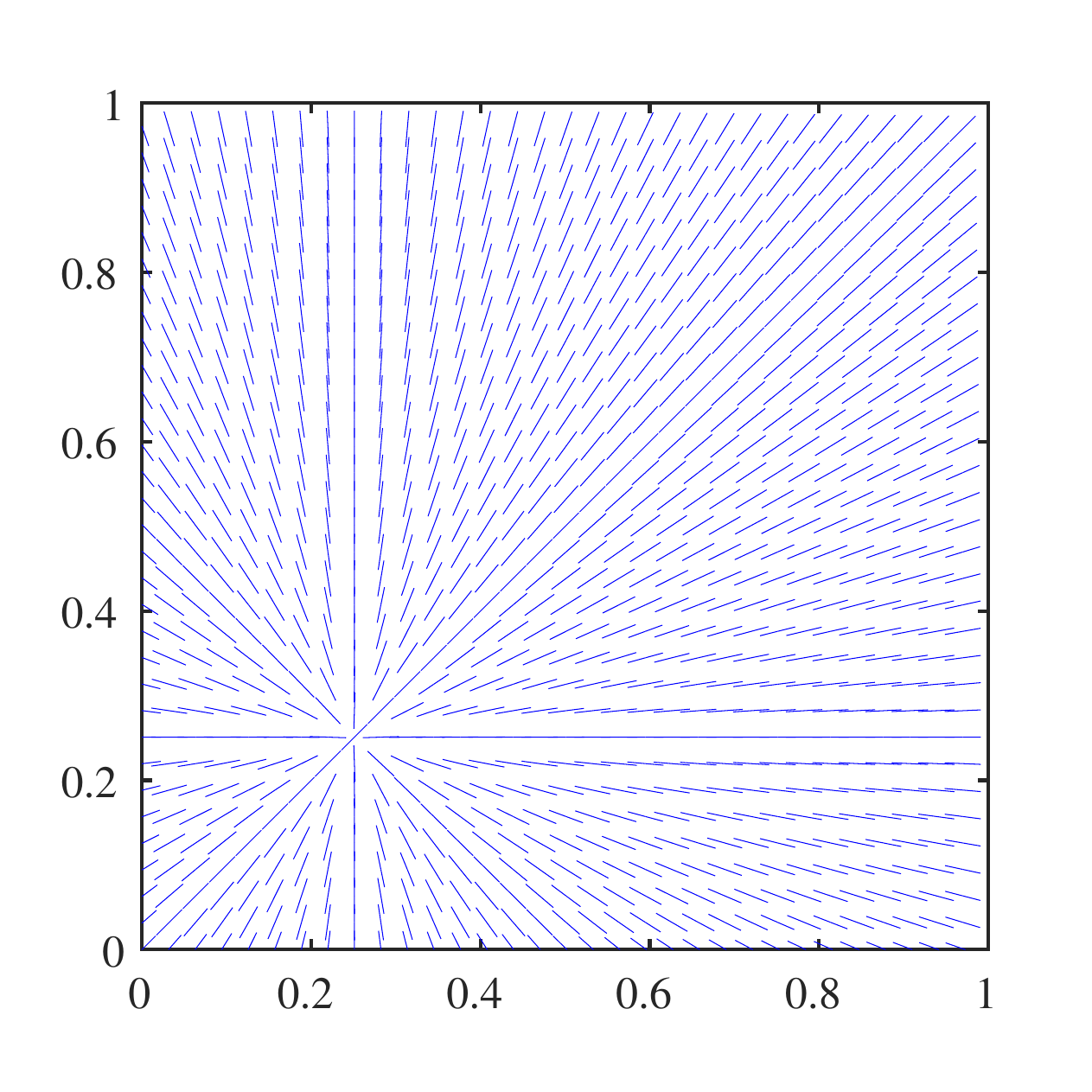}}
   \footnotesize{\centerline{(a) t = 0}} 
   \end{minipage}
   \begin{minipage}{0.3\linewidth}                                                        
   \centerline{\includegraphics[scale=0.26]{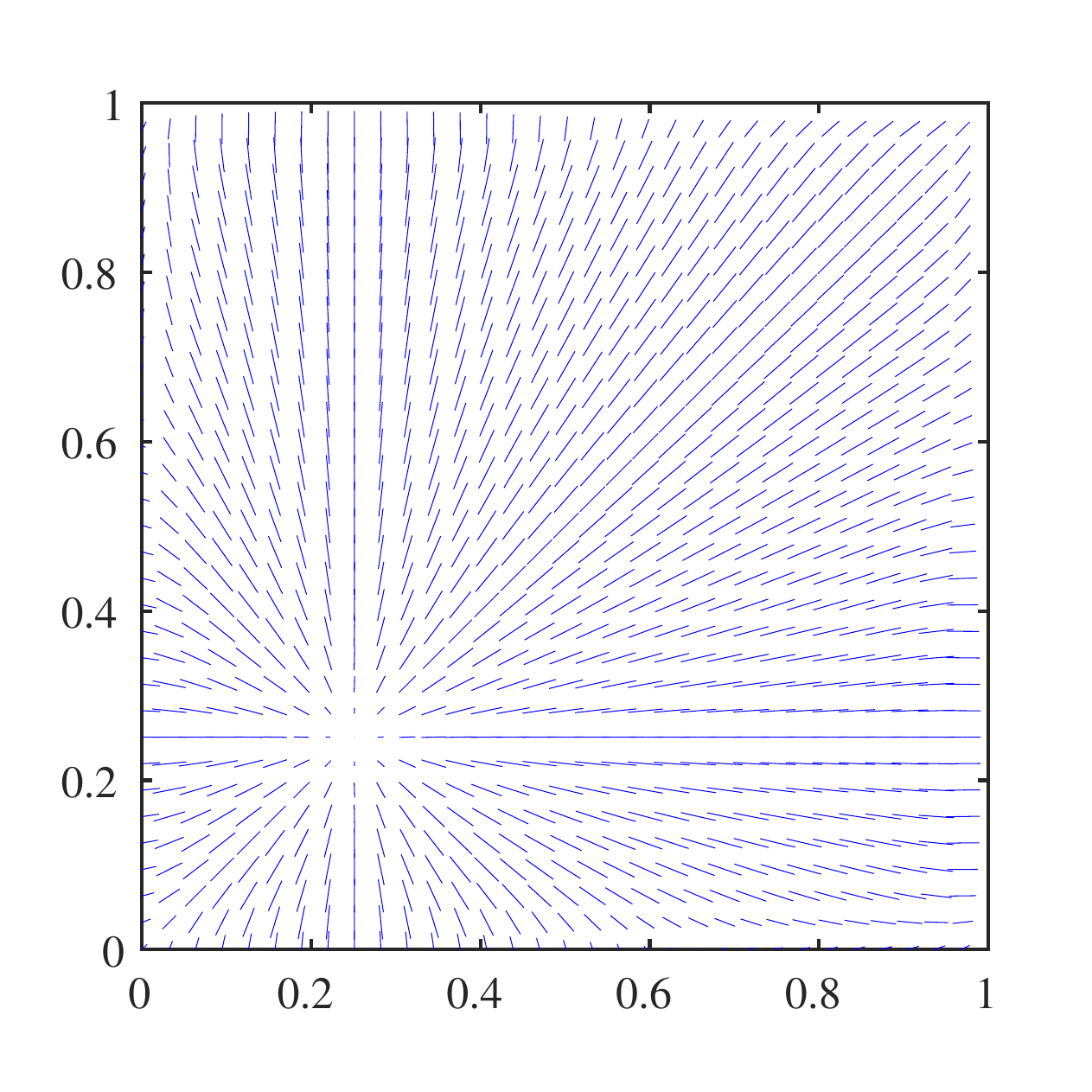}}
   \footnotesize{\centerline{(b) t = 1}}  
   \end{minipage} 
   \begin{minipage}{0.3\linewidth}                                                        
   \centerline{\includegraphics[scale=0.26]{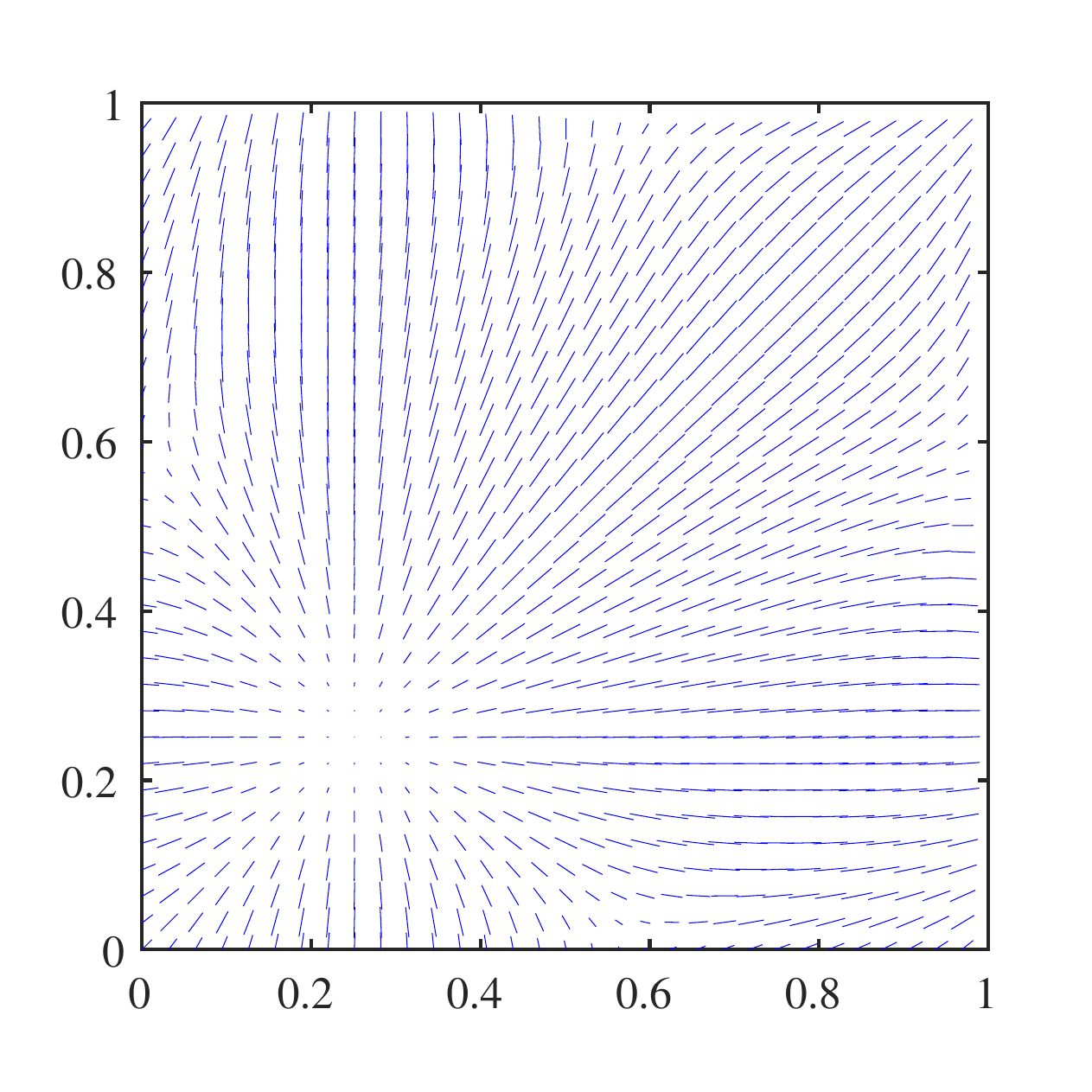}}  
   \footnotesize{\centerline{(c) t = 10}}  
   \end{minipage} 
   \begin{minipage}{0.3\linewidth}                                                        
   \centerline{\includegraphics[scale=0.26]{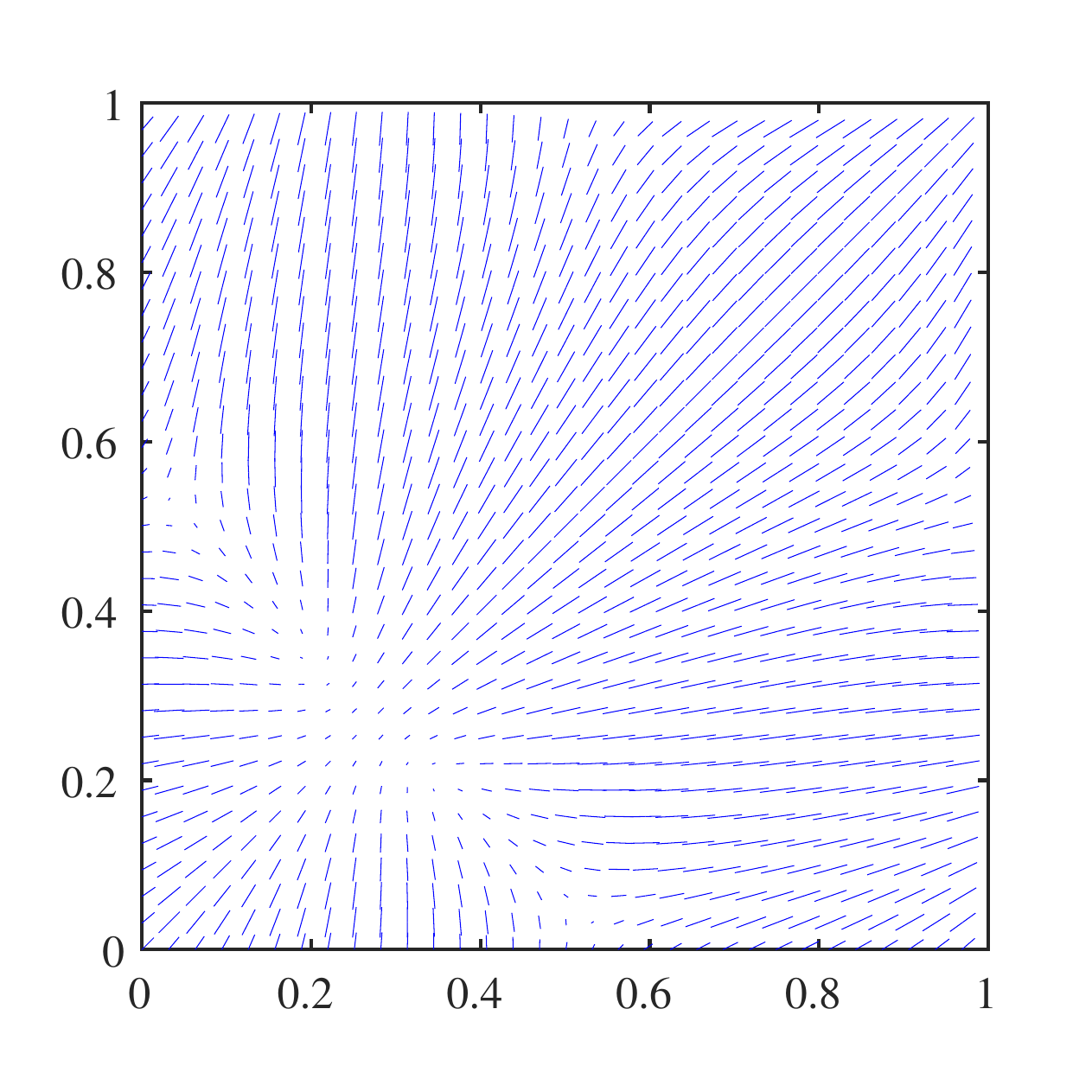}}  
   \footnotesize{\centerline{(d) t = 20}}  
   \end{minipage}  
   \begin{minipage}{0.3\linewidth}                                                        
   \centerline{\includegraphics[scale=0.26]{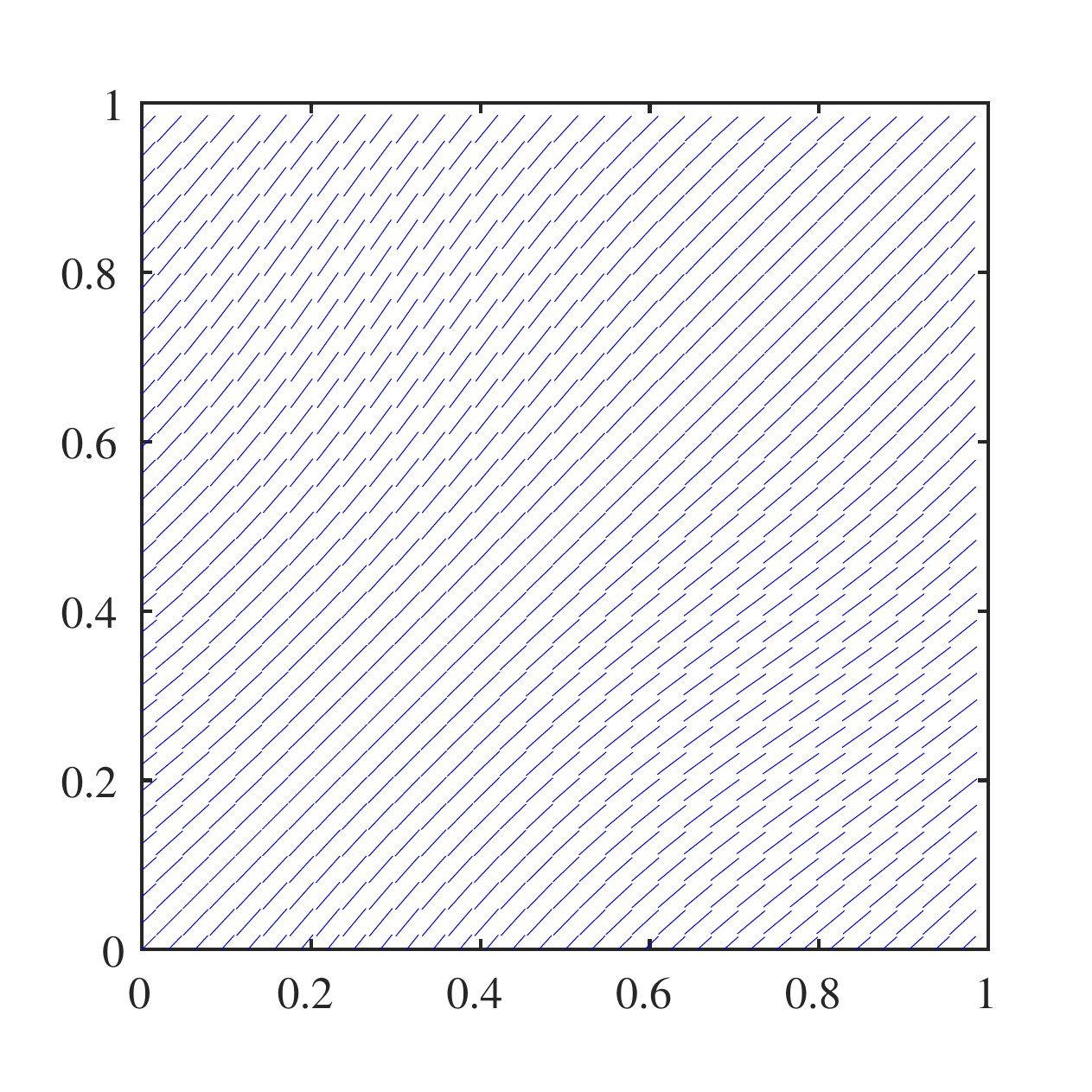}} 
   \footnotesize{\centerline{(e) t = 60}}   
   \end{minipage}
   \begin{minipage}{0.3\linewidth}                                                        
   \centerline{\includegraphics[scale=0.26]{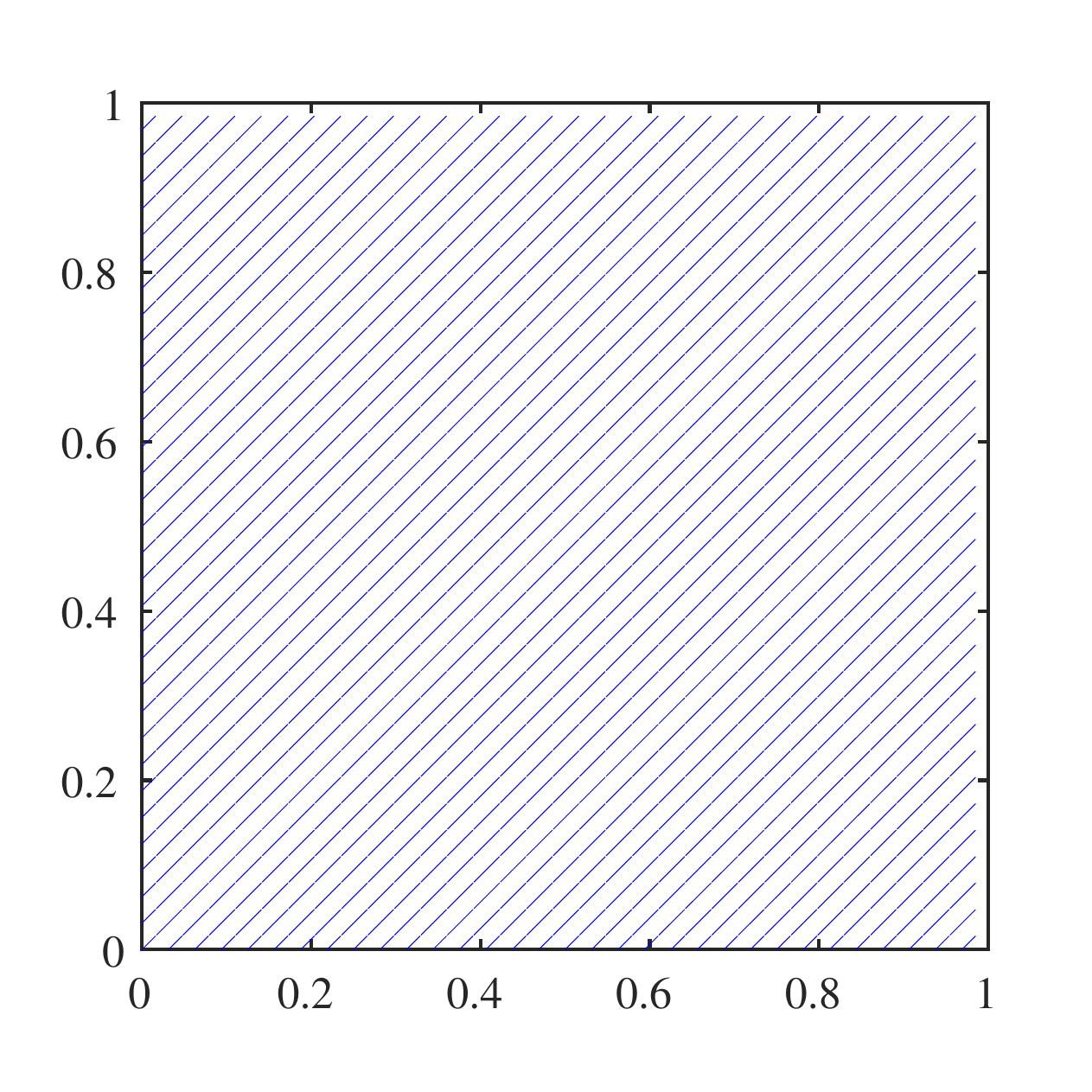}} 
   \footnotesize{\centerline{(f) t = 200}}   
   \end{minipage}   
    \caption{
    Dynamical evolution of director.
    }   
    \label{png:time_evolution}                             
\end{figure}

\begin{figure}[!htbp]                                                           
    \centering      
   \begin{minipage}[t]{1\linewidth}                                                        
   \centerline{\includegraphics[scale=0.5]{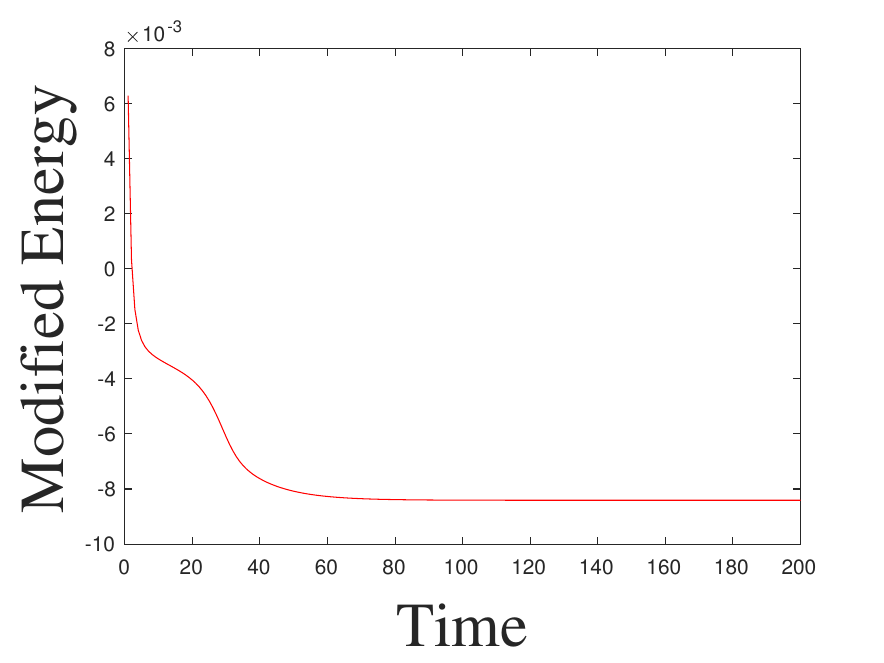}}  
   \end{minipage}   
    \caption{
    Time evolution of modified energy.
     }   
    \label{png:energy_evolution}                             
\end{figure}
Fig.\,\ref{png:time_evolution}(a) shows an initial $+1$ type defect at the point $(0.25,0.25)$. 
In Fig.\,\ref{png:time_evolution}(b-f),
the initial $+1$ type defect gradually escapes. 
Eventually, the system reaches a steady state.
Fig.\,\ref{png:energy_evolution} shows that the modified energy decreases with
time, consistent with our analysis.

\section{Conclusion}
\label{sec:conclusion}
We carry out a rigorous error analysis for a first-order SAV scheme 
	of 3D hydrodynamic ${\bf Q}$-tensor model of nematic liquid crystals.
The scheme maintains the properties of energy stability and uniqueness of solvability, 
which are proved in detail.
Thus, we derive the rigorous error estimate of order $O(\delta t)$ in the sense of $L^2$ norm.
Finally, some numerical simulations are given to demonstrate the theoretical analysis.

\section*{Acknowledgments}
G. Ji is partially supported by the National Natural Science Foundation of China (Grant
No. 12471363), X. Li is partially supported by Beijing Normal University grant 312200502508.

\nocite{*}
\bibliographystyle{unsrt}
\normalem
\bibliography{AC_NS}

\begin{thebibliography}{10}

\bibitem{de_Gennes_1995_The}
de~Gennes~PG, Prost J, and Pelcovits R.
\newblock The physics of liquid crystals.
\newblock {\em Physics Today}, 48(5):70--71, 1995.

\bibitem{Leslie_FM_1968_Some}
Leslie FM.
\newblock Some constitutive equations for liquid crystals.
\newblock {\em Archive for Rational Mechanics and Analysis}, 28(4):265--283,
  1968.

\bibitem{Beris_AN_1994_Thermodynamics}
Beris AN and Edwards BJ.
\newblock {\em Thermodynamics of flowing systems: with internal
  microstructure}.
\newblock Oxford University Press, 1994.

\bibitem{Wang_Q_2002_A}
Wang Q, Forest MG, and Zhou R.
\newblock A hydrodynamic theory for solutions of nonhomogeneous nematic liquid
  crystalline polymers with density variations.
\newblock 2002.

\bibitem{Zhao_J_2016_Semi}
Zhao J and Wang Q.
\newblock Semi-discrete energy-stable schemes for a tensor-based hydrodynamic
  model of nematic liquid crystal flows.
\newblock {\em Journal of Scientific Computing}, 68(3):1241--1266, 2016.

\bibitem{Cavaterra_C_2016_Global}
Cavaterra C, Rocca E, Wu~H, and Xu~X.
\newblock Global strong solutions of the full {N}avier-{S}tokes and {Q}-tensor
  system for nematic liquid crystal flows in two dimensions.
\newblock {\em SIAM Journal on Mathematical Analysis}, 48(2):1368--1399, 2016.

\bibitem{Paicu_M_2012_Energy}
Paicu M and Zarnescu A.
\newblock Energy dissipation and regularity for a coupled {N}avier–{S}tokes
  and {Q}-tensor system.
\newblock {\em Archive for Rational Mechanics and Analysis}, 203(1):45--67,
  2012.

\bibitem{Fan_J_2013_Regularity}
Fan J and Ozawa T.
\newblock Regularity criteria for a coupled {N}avier-{S}tokes and {Q}-tensor
  system.
\newblock {\em International Journal of Analysis}, 2013:1--5, 2013.

\bibitem{Guillén-González_F_2014_Weak}
Guillén-González F and Ángeles RodrÍguez-Bellido~M.
\newblock Weak time regularity and uniqueness for a {Q}-tensor model.
\newblock {\em SIAM Journal on Mathematical Analysis}, 46(5):3540--3567, 2014.

\bibitem{Guillén-González_F_2015_Weak}
Guillén-González F and Rodríguez-Bellido MÁ.
\newblock Weak solutions for an initial–boundary {Q}-tensor problem related
  to liquid crystals.
\newblock {\em Nonlinear Analysis: Theory, Methods \& Applications},
  112:84--104, 2015.

\bibitem{MacDonald_CS_2015_Efficient}
MacDonald CS, Mackenzie JA, Ramage A, and Newton CJP.
\newblock Efficient moving mesh methods for {Q}-tensor models of nematic liquid
  crystals.
\newblock {\em SIAM Journal on Scientific Computing}, 37(2):B215--B238, 2015.

\bibitem{Ramage_A_2016_Computational}
Ramage A and Sonnet AM.
\newblock Computational fluid dynamics for nematic liquid crystals.
\newblock {\em BIT Numerical Mathematics}, 56(2):573--586, 2016.

\bibitem{Cai_Y_2017_Stable}
Cai Y, Shen J, and Xu~X.
\newblock A stable scheme and its convergence analysis for a 2{D} dynamic
  {Q}-tensor model of nematic liquid crystals.
\newblock {\em Mathematical Models and Methods in Applied Sciences},
  27(08):1459--1488, 2017.

\bibitem{J_Shen_2010_discrete}
Shen J and Yang X.
\newblock Numerical approximations of {A}llen-{C}ahn and {C}ahn-{H}illiard
  equations.
\newblock {\em Discrete and Continuous Dynamical Systems}, 28(4):1669--1691,
  2010.

\bibitem{J_Shen_2015_decoupled}
Shen J and Yang X.
\newblock Decoupled, energy stable schemes for phase-field models of two-phase
  incompressible flows.
\newblock {\em SIAM Journal on Numerical Analysis}, 53(1):279--296, 2015.

\bibitem{DJ_Eyre_1998_unconditionally}
Eyre DJ.
\newblock Unconditionally gradient stable time marching the {C}ahn-{H}illiard
  equation.
\newblock {\em MRS Online Proceedings Library}, 529(1):39--46, 1998.

\bibitem{Zhao_J_2017_novel}
Zhao J, Yang X, Gong Y, and Wang Q.
\newblock A novel linear second order unconditionally energy stable scheme for
  a hydrodynamic {Q}-tensor model of liquid crystals.
\newblock {\em Computer Methods in Applied Mechanics and Engineering},
  318:803--825, 2017.

\bibitem{X_Yang_2016_linear}
Yang X.
\newblock Linear, first and second-order, unconditionally energy stable
  numerical schemes for the phase field model of homopolymer blends.
\newblock {\em Journal of Computational Physics}, 327:294--316, 2016.

\bibitem{X_Yang_2017_numerical}
Yang X, Zhao J, and Wang Q.
\newblock Numerical approximations for the molecular beam epitaxial growth
  model based on the invariant energy quadratization method.
\newblock {\em Journal of Computational Physics}, 333:104--127, 2017.

\bibitem{Gudibanda_VM_2022_Convergence}
Gudibanda VM, Weber F, and Yue Y.
\newblock Convergence analysis of a fully discrete energy-stable numerical
  scheme for the {Q}-tensor flow of liquid crystals.
\newblock {\em SIAM Journal on Numerical Analysis}, 60(4):2150--2181, 2022.

\bibitem{Yue_Y_2022_On}
Yue Y.
\newblock On convergence of an unconditional stable numerical scheme for
  {Q}-tensor flow based on invariant quardratization method.
\newblock 2022.

\bibitem{Ji_G_2020_BDF2}
Ji~G.
\newblock A bdf2 energy-stable scheme for a general tensor-based model of
  liquid crystals.
\newblock {\em East Asian Journal on Applied Mathematics}, 10(1):57--71, 2020.

\bibitem{J_Shen_2018_scalar}
Shen J, Xu~J, and Yang J.
\newblock The scalar auxiliary variable ({SAV}) approach for gradient flows.
\newblock {\em Journal of Computational Physics}, 353:407--416, 2018.

\bibitem{J_Shen_2019_new}
Shen J, Xu~J, and Yang J.
\newblock A new class of efficient and robust energy stable schemes for
  gradient flows.
\newblock {\em SIAM Review}, 61(3):474--506, 2019.

\bibitem{C_Chen_2019_fast}
Chen C and Yang X.
\newblock Fast, provably unconditionally energy stable, and second-order
  accurate algorithms for the anisotropic {C}ahn–{H}illiard model.
\newblock {\em Computer Methods in Applied Mechanics and Engineering},
  351:35--59, 2019.

\bibitem{Sun_H_2008_On}
Sun H and Liu C.
\newblock On energetic variational approaches in modeling the nematic liquid
  crystal flows.
\newblock {\em Discrete and Continuous Dynamical Systems}, 23(1/2):455--475,
  2008.

\bibitem{Wu_H_2012_Asymptotic}
Wu~H, Xu~X, and Liu C.
\newblock Asymptotic behavior for a nematic liquid crystal model with different
  kinematic transport properties.
\newblock {\em Calculus of Variations and Partial Differential Equations},
  45(3-4):319--345, 2012.

\bibitem{Wu_H_2013_On}
Wu~H, Xu~X, and Liu C.
\newblock On the general {E}ricksen–{L}eslie system: parodi’s relation,
  well-posedness and stability.
\newblock {\em Archive for Rational Mechanics and Analysis}, 208(1):59--107,
  2013.

\bibitem{Huang_J_2015_Global}
Huang J and Ding S.
\newblock Global well-posedness for the dynamical {Q}-tensor model of liquid
  crystals.
\newblock {\em Science China Mathematics}, 58(6):1349--1366, 2015.

\bibitem{He_Y_2007_Stability}
He~Y and Sun W.
\newblock Stability and convergence of the crank–nicolson/adams–bashforth
  scheme for the time-dependent {N}avier–{S}tokes equations.
\newblock {\em SIAM Journal on Numerical Analysis}, 45(2):837--869, 2007.

\bibitem{Shen_J_1990_Long}
Shen J.
\newblock Long time stability and convergence for fully discrete nonlinear
  galerkin methods.
\newblock {\em Applicable Analysis}, 38(4):201--229, 1990.

\bibitem{Hou_D_2024_Energy}
Hou D, Li~X, Qiao Z, and Zheng N.
\newblock Energy stable and maximum bound principle preserving schemes for the
  {Q}-tensor flow of liquid crystals.
\newblock 2024.

\end{thebibliography}

\end{document}